\documentclass[12pt,bookmark=true]{article}

 \RequirePackage{geometry}
\usepackage{graphicx}				

\usepackage{amssymb}
\usepackage{amsmath}
\usepackage{amsthm}
\usepackage{enumerate}
\usepackage{color}
 \usepackage{lineno}

\newtheorem{theorem}{\indent Theorem}[section]

\newtheorem{lemma}[theorem]{\indent Lemma}

\makeatletter
\def\@fnsymbol#1{\ifcase#1\or 1\or 2\or 3\or 4\or 5\or 6\or 7\or 8\or 9\or 10\else\@ctrerr\fi}

\begin{document}
\title{Asymptotic behavior of the wave equation subject to a Kelvin-Voigt nonlocal damping}
\author{
J. C. O. Faria\footnote{Email address: jcofaria@uem.br. Corresponding author },\,\,\,\,M. M. Cavalcanti\footnote{Email address: mmcavalcanti@uem.br},\,\,\,\,V. N. Domingos Cavalcanti\footnote{Email address: vndcavalcanti@uem.br},\,\,\,\,C. A. Okawa\footnote{Email address: cintyaokawa@gmail.com}\\
	{\small Department of Mathematics, State University of
Maring\'a, Maring\'a, 87020-900,  P.  R, Brazil}
}

					
\maketitle

\begin{center}
\begin{abstract}
	In this article, we examine the well-posedness and asymptotic behavior of the energy associated with the wave equation that incorporates a Kelvin-Voigt nonlocal damping structure given by $-||\nabla u_t(t)||_2^2 \Delta u_t$. Utilizing the robust framework of nonlinear semigroups, we successfully demonstrate the existence of both strong and weak solutions. Our findings reveal that the decay rate for these solutions is optimally characterized by $1/t$, highlighting the effectiveness of this dissipative structure. This work not only enhances our understanding of the wave equation under nonlocal damping but also emphasizes the crucial balance between mathematical rigor and physical relevance.\\ 

\vspace{0.3cm}

\textbf{Keywords}: hyperbolic equations, asymptotic behavior, nonlocal damping term.

\textbf{Mathematics Subject Classification (2020)}: 35L05, 35L15, 35B35, 35B40.
\end{abstract}
\end{center}

\section{Introduction}

\setcounter{equation}{0}
\def\theequation{1.\arabic{equation}}\makeatother

The main purpose of the present article is to study the well-posedness and the asymptotic behavior of the energy associated with the following wave equation subject to a Kelvin-Voigt nonlocal damping:
\begin{equation}\label{eq1.1}
\left\{
\begin{aligned}
& \partial_t^2 u - \Delta u - D(u_t(t)) \Delta u_t =0 ~\hbox{ in }~\Omega \times \mathbb{R}_+,\\
&u=0~ \hbox{ on }~\partial \Omega \times \mathbb{R}_+,\\
&u(x,0)=u_0(x);\quad \partial_t u(x,0)=u_1(x),~x\in \Omega,
\end{aligned}
\right.
\end{equation}
where
\begin{align}\label{D(u(t))}
D(u_t(t)) = \|\nabla u_t(t)\|_{L^2(\Omega)}^2
\end{align}
and $\Omega$ is a bounded domain in $\mathbb{R}^n$ with a smooth boundary.

The study of nonlocal dissipative phenomena dates back to 1980s, marking a significant advancement in this field. In 1989, Balakrishnan and Taylor \cite{Balakrishnan}, inspired by Zhang's work \cite{Zhang}, introduced several models related to the investigation of these dissipative phenomena, specifically in the context of flight structures. These models are known as nonlocal dissipations. The one-dimensional behavior of such models can be described by the following ODE:
$$x''(t) + \alpha x(t) + \beta D(x(t),x'(t))=0,$$
where $x(t)$ represents the displacement of a point $x$ in the flight structure at time $t$ and $\alpha, \beta$ are positive constants. This model has attracted the attention of researchers worldwide, particularly when the structural damping term is given by by $D(x(t),x'(t))x'(t)$, due to the slow decay produced by this type of damping mechanism. Indeed, let $\Omega$ be  a bounded domain in $\mathbb{R}^n$ with smooth boundary and let us consider, for instance, the linear wave equation subject to a nonlinear and nonlocal damping term of Balakrishnan and Taylor's type:
\begin{eqnarray}\label{eq1.1'}
	\partial_t^2 u- \Delta u + E_u^w(t) \partial_t u = 0 ~\hbox{ in }~\Omega \times \mathbb{R}_+,
\end{eqnarray}
with Dirichlet homogeneous boundary conditions and weak initial data $\{u_0,u_1\}\in H_0^1(\Omega) \times L^2(\Omega)$. The energy associated to weak solutions to problem (\ref{eq1.1'}) is given by $E_u^w(t):= \frac12 ||\partial_t u(t)||_2^2 + \frac12||\nabla u(t)||_2^2$.

A straightforward computation shows that
\begin{align*}
\frac{d}{dt} E^w_u(t) + E_u^w(t) \|\partial_t u(t)\|_{L^2(\Omega)}^2=0
\end{align*}
for all $t\geq 0$ and
\begin{align}\label{eq1.2}
\frac{d}{dt} E_u^w(t) + 2\left(E^w_u(t)\right)^2 \geq 0
\end{align}
for any $t\geq 0.$ Thus, we infer from inequality \eqref{eq1.2} that
\begin{align}\label{eq1.3}
E^w_u(t) \geq \left( \frac{1}{E^w_u(0)} + 2t\right)^{-1}
\end{align}
for any $t\geq 0,$ which means that the energy $E^w_u(t)$ can not decay fast to zero. In fact, it can be proven that the energy associated with problem (\ref{eq1.1'}) satisfies the following inequality:
\begin{align*}
\left(4t+\frac{1}{E^w_u(0)}\right)^{-1}\leq E_u^w(t)\leq \left(\frac{1}{\mu}(t-1)^{+}+E_u^w(0)^{-1}\right)^{-1}
\end{align*}
for some $\mu>0$ and for any $t\geq 0$, which means that $1/t$ is the best decay rate estimate for this kind of prototype.

On the other hand, the weak and regular energies associated to the equation (\ref{eq1.1}) are expressed as follows:
\begin{eqnarray}
	E_u^w(t)&:=& \frac12 || u_t(t)||_{L^2(\Omega)}^2 + \frac12||\nabla u(t)||_{L^2(\Omega)}^2 \label{WE}\\
	E_u^r(t)&:=& \frac12 ||\nabla u_t(t)||_{L^2(\Omega)}^2 + \frac12||\Delta u(t)||_{L^2(\Omega)}^2.\label{RE}
\end{eqnarray}
From these definitions, we can derive the following energy identities:
\begin{eqnarray}
	E_u^w(T)- E_u^w(s) = - \int_S^T D(u_t(t)) ||\nabla  u_t(t)||_{L^2(\Omega)}^2\,dt,~0<S<T<+\infty,\label{WIE}\\
	E_u^r(T)- E_u^r(s) = - \int_S^T D(u_t(t)) ||\Delta u_t(t)||_{L^2(\Omega)}^2\,dt,~0<S<T<+\infty,\label{RIE}
\end{eqnarray}
which establish that both energies are weakly and regularly dissipative, as well as suggest that the previous analysis could also be applicable in this situation.

There are some interesting works in the previous literature addressing various analytical aspects of a wide range of evolution equations subject to Balakrishnan and Taylor dissipations. We would like to mention the following $n$-dimensional model for beam equations with nonlocal Kelvin-Voigt damping:
$$\partial_t^2 u + \Delta^2 u - M(||\nabla u(t)||_2^2)\Delta u - N(||\nabla u(t)||_2^2)\Delta u_t=0.$$

In their work, da Silva and Narciso \cite{Silva-Narciso} shown that the nonlocal damping function $N(s)$ is bounded below by
a positive constant $\alpha_0 > 0$ for all $s \geq 0$, which implies the damping does not degenerate over time, resulting in exponential decay. {However, there are other significant studies in which structural damping can degenerate.  In \cite{Li-Zhang-Hu}, Li, Zang and Hu explored the following nonlinear wave equation, which incorporates both nonlocal damping and nonlinear boundary damping, posed in a bounded domain $\Omega\subset\mathbb{R}^n$ and is based on the nonlinear theory of meson fields:
\begin{equation*}
	\left\{
	\begin{aligned}
		& u_{tt} - \Delta u + M\left(\int_{\Omega}|u_t|^2dx\right)u_t+N\left(\int_{\Omega}|u|^pdx\right)|u|^{p-2}u =0 ~\hbox{ in }~\Omega \times \mathbb{R}_+,\\
		&u=0~ \hbox{ on }~\Gamma_0 \times \mathbb{R}_+,\\
			&\frac{\partial u}{\partial\nu}+\alpha(t)h(u_t)~ \hbox{ on }~\Gamma_1 \times \mathbb{R}_+,\\
		&u(x,0)=u_0(x);\quad u_t(x,0)=u_1(x),~x\in \Omega.
	\end{aligned}
	\right.
\end{equation*}
In this formulation, $\partial \Omega = \Gamma_0 \cup \Gamma_1$, $ \overline{\Gamma}_0 \cap \overline{\Gamma}_1 = \emptyset $,   $M(s)$ is a nonlocal positive non-decreasing function that is bounded below by a positive constant $M_0>0$. Additionally $N$, $\alpha$ and $h$ are real
valued functions that satisfy certain general conditions. The authors used the multiplier technique to derive a general result regarding energy decay associated with the solution. They also established energy decay for the equation that includes nonlocal damping alone, but without the nonlinear boundary damping. Further works related to this topic can be found in \cite{Liu-Peng-Su}, \cite{Hu-Li-Liu}, \cite{Zhang-Li} and references therein. }
  
  As far as we are concerned, the present paper is pioneering in addressing a nonlinear and nonlocal damping structure defined as  $-||\nabla u_t(t)||_2^2 \Delta u_t$ which differs from the other ones $E_u^w(t)u_t$, $-E_u^w(t) \Delta u_t$,  $||\nabla u(t)||_2^2 u_t$, $-||\nabla u(t)||_2^2 \Delta u_t $, $||\nabla u(t)||_2^2 ||u_t||_2^{m-2}u_t$, $-[E_u^w(t)]^q \Delta u_t$. It is worth mentioning that our nonlocal term $||\nabla u_t(t)||_2^2 $ incorporates the potential part of the strong energy, in contrast to the aforementioned models, where the nonlocal component of the damping structure accounts for some aspects of the weak energy, or in some cases, the total weak energy of the system.

 An interesting and natural problem arises when we have a dissipative term of the type $a(x) E_u^w(t) u_t$ or $-E_u^w(t)\hbox{div}[a(x) \nabla u_t]$. In these cases, the functions that control the dissipation may degenerate within the domain $\Omega$ in the spacial variable $x$ or at infinity in the temporal variable $t$. Specifically, we have $a(x) \geq a_0 >0$ in a neighbourhood $\omega$ of the boundary $\partial \Omega$. What can we expect in this case?  Unfortunately, single dissipations locally distributed like the above examples remain an open problem. In general, we employ microlocal analysis in order to propagate, roughly speaking, the microlocal defect measure of a certain sequence $\{\partial_t y^k\}_k$ which converges weakly to zero in $L^2(\Omega \times (0,T))$ (see \cite{Math Annalen}). However, without information on where $\partial_t y^k$ converges strongly to zero in a subset $\omega \times (0,T)$ such that $\omega\times (0,T) \subset \Omega \times (0,T)$, in view of the degeneracy of $a(x) E_u^w(t) u_t$, it is impossible to use arguments of microlocal analysis for propagating this convergence to the whole space $\Omega \times (0,T).$ In other words, $a(x) E_u^w(t)$ is not bounded from below for sufficiently large values of  $t$, which destroys any attempt to use microlocal analysis. In addition, the use of suitable multipliers as in \cite{TAMS},\cite{ARMA} seems do not work. We are not aware whether the strategies employed in \cite{Ammari-Robbiano}, \cite{Bellassoued} or \cite{Burq} can be applied in the present case to obtain a very slow decay of logarithmic type due to the nonlinear nature of the equation $$u_{tt} - \Delta u + a(x) E_u^w(t) u_t=0~\hbox{ in } \Omega \times \mathbb{R}_+.$$

It is also important to highlight that previous literature has often relied on the theory of linear semigroups or a Galerkin scheme to establish the well-posedness of related problems. In instances where the semigroup method was not applicable, a Galerkin scheme sometimes only provided the existence and uniqueness of regular solutions, making it impossible to prove the existence of weak solutions. In this paper, we need to address a nonlinear operator, which presents unique challenges. Although both the nonlocal term and the dissipative term are inherent to regular solutions, a thorough analysis of certain a priori estimates, along with identities \eqref{WE} - \eqref{RIE}, reveals a significant limitation: it is impossible to achieve the necessary level of regularity to utilize essential compactness results when applying a Galerkin scheme to demonstrate the existence of solutions to the problem at hand. Nevertheless, we successfully introduce an approach that merges the theory of nonlinear semigroups combined with a Galerkin analysis to establish the existence of both strong and weak solutions, as well as a decay rate of the form $1/t$ for both regular and weak solutions, considered the best for this type of dissipative structure. 

This work is organized as follows: section 2 is devoted to show the well-posedness of problem (\ref{eq1.1}) by considering the nonlinear semigroup approach, while in section 3 we prove the regular and weak solutions decay as $1/t$ as $t$ tends to infinity.   Finally in section 4 we wrote an appendix concerning the well-known nonlinear semigroup theory for $m$-dissipative nonlinear operators in Banach spaces necessary for the development of this article.

\medskip
\section{Wellposedness}
\medskip
In order to address the wellposedness of problem \eqref{eq1.1}, consider
\begin{equation*}
U:=\left(
\begin{aligned}
&u\\
&u_{t}
\end{aligned}
\right).
\end{equation*}
Therefore, \eqref{eq1.1} can be rewritten as
\begin{equation}\label{eq2.1}
\frac{dU}{dt}=
\left(
\begin{aligned}
&0\qquad \quad I\\
&\Delta \quad ||\nabla \cdot||_2\Delta
\end{aligned}
\right)
\left(
\begin{aligned}
&u\\
&u_t
\end{aligned}
\right).
\end{equation}

Motivated by equation (\ref{eq2.1}), we define the operator $A$ on the space $\mathcal{H}:= H_0^1(\Omega) \times L^2(\Omega)$, where
\begin{eqnarray}\label{eq2.2}
D(A)=\{(u,v)\in \mathcal{H}: v\in H_0^1(\Omega) ~\hbox{and}~(\Delta u+||\nabla v||_2^2 \Delta v)\in L^2(\Omega)\},
\end{eqnarray}
and
\begin{eqnarray}\label{eq2.3}
A(u,v)= \left(v, \Delta u+||\nabla v||_2^2\Delta v\right)~\forall (u,v)\in D(A).
\end{eqnarray}

In the following discussion, we aim to utilize standard results from nonlinear semigroup theory. Indeed, to prove that $A$ is a \underline{dissipative} operator, let us consider $(u_1,v_1), (u_2,v_2) \in D(A)$. Then, after performing integration by parts, we have:
\begin{eqnarray}\label{eq2.4}
&&\left<A(u_1,v_1)-A(u_2,v_2) , (u_1,v_1)-(u_2,v_2)\right>_{\mathcal{H}}\\
&&=-\left\{||\nabla v_1||_2^4 - ||\nabla v_1||_2^2 (\nabla v_1,\nabla v_2)_{L^2(\Omega)} - ||\nabla v_2||_2^2 (\nabla v_2,\nabla v_1)_{L^2(\Omega)} +||\nabla v_2||_2^4\right\}.\nonumber
\end{eqnarray}

On the other hand, we observe that
\begin{eqnarray}\label{eq2.5}
&&||\nabla v_1||_2^4 - ||\nabla v_1||_2^2 (\nabla v_1,\nabla v_2)_{L^2(\Omega)} - ||\nabla v_2||_2^2 (\nabla v_2,\nabla v_1)_{L^2(\Omega)} +||\nabla v_2||_2^4\\
&&\geq  ||\nabla v_1||_2^4 - \frac12 ||\nabla v_1||_2^4 - \frac12 ||\nabla v_1||_2^2||\nabla v_2||_2^2 - \frac12 ||\nabla v_2||_2^4 - \frac12 ||\nabla v_2||_2^2||\nabla v_1||_2^2 + ||\nabla v_2||_2^4\nonumber\\
&&=\frac12\left[||\nabla v_1||_2^2-||\nabla v_2||_2^2\right]^2\geq0.\nonumber
\end{eqnarray}

Therefore, combining (\ref{eq2.4}) and (\ref{eq2.5}) we deduce that
\begin{eqnarray}\label{eq2.6}
\left<A(u_1,v_1)-A(u_2,v_2) , (u_1,v_1)-(u_2,v_2)\right>_{\mathcal{H}}\leq 0, ~\hbox{ for all }(u_1,v_1), (u_2,v_2) \in D(A),
\end{eqnarray}
which proves the desired dissipativity.

Next, we will demonstrate that the operator $A$ is \underline{$m$-dissipative}, or equivalently, that:
\begin{eqnarray}\label{eq2.7}
R[I-\alpha A]= \mathcal{H},~\forall \alpha>0.
\end{eqnarray}

Indeed, let us consider $(f,g)\in \mathcal{H}=H_0^1(\Omega) \times L^2(\Omega)$. We aim to show that there exists $(u,v)\in D(A)$ such that
\begin{eqnarray}\label{eq2.8}
(u,v)-\alpha \left(v,~\Delta u+||\nabla v||_2^2 \Delta v\right)= (f,g)
\end{eqnarray}

Taking (\ref{eq2.8}) into account, we can derive the following system:
\begin{equation}\label{eq.2.9}
\left\{
\begin{aligned}
& u-\alpha v=f\\
&v-\alpha \Delta u- \alpha ||\nabla v||_2^2 \Delta v=g.
\end{aligned}
\right.
\end{equation}

From the first equation of (\ref{eq.2.9}) $u$ can be expressed as follows:
\begin{eqnarray}\label{eq2.10}
&&u=f+\alpha v.
\end{eqnarray}

By substituting equation (\ref{eq2.10}) into the second equation of (\ref{eq.2.9}), we arrive at the following nonlinear elliptic problem:
\begin{eqnarray}\label{eq2.11}
-\alpha^2 \Delta v + v - \alpha ||\nabla v||_2^2 \Delta v =g+\alpha \Delta f.
\end{eqnarray}

The following calculations will focus on finding a solution to problem (\ref{eq2.11}) in order to establish the $m$-dissipative property of the operator in question. For this, we shall make use of a Galerkin scheme. Let $\{\omega_j\}_{j\in \mathbb{N}}$ be a special basis in $H_0^1(\Omega) \cap H^2(\Omega)$ formed by the eigenfunctions of $-\Delta$, namely, $-\Delta \omega_j=\lambda_j \omega_j$, $j=1,2,\cdots$ and let $V_m=\left[\omega_1,\cdots,\omega_m\right]=\hbox{span}\{\omega_1,\cdots,\omega_m\}$. In $V_m$ we want to find $$v_m=\sum_{i=1}^m \xi_i \omega_i\in V_m,$$ such that
\begin{eqnarray}\label{eq2.12}
-\alpha^2\left(\Delta v_m, \omega_j\right)_{L^2(\Omega)} &+& \left(v_m, \omega_j\right)_{L^2(\Omega)} - \alpha ||\nabla v_m||_2^2\left( \Delta v_m, \omega_j\right)_{L^2(\Omega)}\\
&=& \left(g, \omega_j\right)_{L^2(\Omega)} + \alpha \left< \Delta f, \omega_j\right>_{H^{-1};H_0^1},~j=1,2,\cdots,m.\nonumber
\end{eqnarray}

Next, our objective is to prove that system (\ref{eq2.12}) is well-posed in $\mathbb{R}^m$. For this purpose we are going to make use of the following helpful result:
\begin{lemma}\label{Lema1}[Visik]~Let $\xi \mapsto P(\xi)$ be a continuous map $\mathbb{R}^n \rightarrow \mathbb{R}^n$ such that for some $\rho>0$ we have $\left(P(\xi),\xi\right)_{\mathbb{R}^n}\geq 0$, for all $\xi \in \mathbb{R}^n$ with $||\xi||=\rho$. Then, there exists $\xi_0 \in \overline{B_\rho(0)}$ such that $P(\xi_0)=0.$
\end{lemma}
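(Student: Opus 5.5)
This is the standard acute-angle lemma, and I would prove it by contradiction using Brouwer's fixed point theorem on the closed ball $\overline{B_\rho(0)}$. Suppose $P(\xi)\neq 0$ for every $\xi\in\overline{B_\rho(0)}$. Since $P$ is continuous and nonvanishing on this compact convex set, the map
\begin{equation*}
T(\xi):=-\rho\,\frac{P(\xi)}{\|P(\xi)\|},\qquad \xi\in\overline{B_\rho(0)},
\end{equation*}
is well defined and continuous. It sends $\overline{B_\rho(0)}$ into itself; in fact it sends it onto the sphere $\|\xi\|=\rho$. By Brouwer's theorem there is a fixed point $\xi^*=T(\xi^*)$.

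The next step is to extract a contradiction from this fixed point. First, $\|\xi^*\|=\|T(\xi^*)\|=\rho$, so $\xi^*$ lies on the sphere where the hypothesis $\left(P(\xi^*),\xi^*\right)_{\mathbb{R}^n}\geq 0$ applies. On the other hand, the fixed-point relation gives
\begin{equation*}
\left(P(\xi^*),\xi^*\right)_{\mathbb{R}^n}=\left(P(\xi^*),-\rho\frac{P(\xi^*)}{\|P(\xi^*)\|}\right)_{\mathbb{R}^n}=-\rho\|P(\xi^*)\|<0.
\end{equation*}
These two facts are incompatible. Hence $P$ must vanish somewhere in $\overline{B_\rho(0)}$, which is the claim.

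There is no genuine obstacle, since the argument reduces entirely to Brouwer's theorem, which I take as known. The only points needing care are checking that $T$ is continuous, which uses the nonvanishing assumption, and observing that every fixed point automatically lies on the boundary sphere, so that the sign hypothesis can be used there. In the application to system \eqref{eq2.12}, $P$ will be the map whose $j$-th component is the left side minus the right side of \eqref{eq2.12}. In that setting the real work lies in verifying the coercivity $\left(P(\xi),\xi\right)\geq 0$ for large $\rho$, which follows from the positive terms $\alpha^2\|\nabla v_m\|_2^2+\|v_m\|_2^2+\alpha\|\nabla v_m\|_2^4$. That verification, however, belongs to the subsequent argument and not to the lemma itself.
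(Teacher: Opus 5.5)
Your argument is correct and is the standard Brouwer fixed-point proof: assume $P$ has no zero in $\overline{B_\rho(0)}$, apply Brouwer to $\xi\mapsto-\rho P(\xi)/\|P(\xi)\|$, and contradict the sign hypothesis at the fixed point, which lies on $\|\xi\|=\rho$. This is the proof of Lions, Lemme 4.3, which the paper cites rather than reproducing. One cosmetic point: $T$ maps the ball \emph{into} the sphere $\|\xi\|=\rho$, not necessarily onto it, and only the former is used.
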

\begin{proof}
See J.L. Lions \cite{Lions}, Lemme 4.3 on pg. 53.
\end{proof}

Indeed, once  $\{\omega_j\}_{j\in \mathbb{N}}$ is a special basis, one has
\begin{equation}\label{eq2.13}
\left.
\begin{aligned}
&\quad\left(\omega_j\right) ~\hbox{ is a complete orthonormal system in }~ L^2(\Omega),\\
&\,\left(\frac{\omega_j}{\sqrt{\lambda_j}}\right)~\hbox{ is a  complete orthonormal system in } H_0^1(\Omega),\\
&\quad\left(\frac{\omega_j}{\lambda_j}\right)~\hbox{ is a complete orthonormal system in }H_0^1(\Omega) \cap H^2(\Omega),
\end{aligned}
\right.
\end{equation}
from which we deduce that
\begin{eqnarray}\label{eq2.14}
&&||v_m||_{H_0^1(\Omega)}^2= ((v_m, v_m))_{H_0^1(\Omega)}= \left(\left(\sum_{j=1}^m \xi_j \omega_j,\sum_{i=1}^m \xi_i \omega_i \right)\right)_{H_0^1(\Omega)}= \sum_{i=1}^m \xi_i^2 \lambda_i\\
&&||v_m||_{L^2(\Omega)}^2= (v_m, v_m)_{L^2(\Omega)}= \left(\sum_{j=1}^m \xi_j \omega_j,\sum_{i=1}^m \xi_i \omega_i\right)_{L^2(\Omega)}= \sum_{i=1}^m \xi_i^2.\label{eq2.15}
\end{eqnarray}

Substituting $v_m=\sum_{i=1}^m \xi_i \omega_i$ in (\ref{eq2.12}), observing that $-\Delta \omega_i=\lambda_i \omega_i$ and taking (\ref{eq2.13}) into account, we deduce
\begin{eqnarray}\label{eq2.16}
&&\alpha^2 \xi_i \lambda_i + \xi_i + \alpha\left(\sum_{i=1}^m \xi_i^2 \lambda_i\right) \xi_i \lambda_i\\
&&=\left(g, \omega_i\right)_{L^2(\Omega)} + \alpha \left< \Delta f, \omega_i\right>_{H^{-1};H_0^1},~i=1,2,\cdots,m.\nonumber
\end{eqnarray}

Now, let us define, for each $i=1,\cdots,m$:
\begin{eqnarray*}
&&\eta_i=\alpha^2 \xi_i \lambda_i + \xi_i + \alpha\left(\sum_{i=1}^m \xi_i^2 \lambda_i\right) \xi_i \lambda_i
-\left(g, \omega_i\right)_{L^2(\Omega)} - \alpha \left< \Delta f, \omega_i\right>_{H^{-1};H_0^1}
\end{eqnarray*}
and consider the map
\begin{eqnarray}
P&:&\mathbb{R}^m \rightarrow \mathbb{R}^m\\
\xi&=&(\xi_1, \cdots, \xi_m) \mapsto P(\xi) = \eta= (\eta_1, \cdots, \eta_m),
\end{eqnarray}
which is clearly continuous. To apply Lemma \ref{Lema1}, we
 aim to establish the existence of $\rho_0>0$ such that $(P\xi,\xi)_{\mathbb{R}^m}\geq 0$ for all $\xi\in \mathbb{R}^m$ with $||\xi||=\rho_0.$~In fact, we have:
\begin{eqnarray*}
(P\xi,\xi)&=&(\eta,\xi) \\
&=& \sum_{i=1}^m\left( \alpha^2 \xi_i^2 \lambda_i + \xi_i^2 + \alpha\left(\sum_{i=1}^m \xi_i^2 \lambda_i\right) \xi_i^2 \lambda_i\right)\\
&-& \sum_{i=1}^m (g, \xi_i\omega_i )_{L^2(\Omega)}- \alpha \left< \Delta f, \xi_i\omega_i \right>_{H^{-1};H_0^1}.
\end{eqnarray*}
From this, it follows that
\begin{eqnarray}\label{eq2.17}
(P\xi,\xi)&=& \alpha^2 ||v_m||_{H_0^1(\Omega)}^2 + ||v_m||_{L^2(\Omega)}^2 + \alpha ||v_m||_{H_0^1(\Omega)}^4-(g,v_m)_{L^2(\Omega)}-\alpha\left<\Delta f, v_m \right>_{H^{-1};H_0^1}\\
&\geq&  \alpha^2 ||v_m||_{H_0^1(\Omega)}^2 + ||v_m||_{L^2(\Omega)}^2 - ||g||_2 ||v_m||_2 - ||\Delta f||_{H^{-1}}||v_m||_{H_0^1}.\nonumber
\end{eqnarray}

We need to consider two cases:

(i) ~If $v_m=0$, then it implies that $(P\xi,\xi)\geq0$ for any $\rho>0$.

(ii)~If $v_m\ne 0$ then we have
\begin{eqnarray*}
&& \alpha^2 ||v_m||_{H_0^1(\Omega)}^2  - ||\Delta f||_{H^{-1}}||v_m||_{H_0^1} \geq 0 ~\hbox{ if }~ ||v_m||_{H_0^1(\Omega)} \geq \frac{||\Delta f||_{H^{-1}}}{\alpha^2}=c_1,\\
&& ||v_m||_{L^2(\Omega)}^2 - ||g||_2 ||v_m||_2 \geq 0 ~\hbox{ if }~||v_m||_{L^2(\Omega)} \geq ||g||_2=c_2.
\end{eqnarray*}

Let us define $\beta_m :=\hbox{min}\{\lambda_1, \cdots,\lambda_m\}$. Then, from (\ref{eq2.14}) and (\ref{eq2.15}) we obtain
\begin{eqnarray}\label{eq2.18}
||v_m||_{H_0^1(\Omega)}^2 &=&\sum_{i=1}^m \xi_i^2 \lambda_i \geq \beta_m \sum_{i=1}^m \xi_i^2= \beta_m ||\xi||_{\mathbb{R}^m}^2,\\
||v_m||_{L^2(\Omega)}^2 &=&\sum_{i=1}^m \xi_i^2 = ||\xi||_{\mathbb{R}^m}^2.\label{eq2.19}
\end{eqnarray}

We affirm that there exist two positive constants $c_0$ and  $\rho_0$ such that for $\xi \in \mathbb{R}^m$ with $||\xi||_{\mathbb{R}^m}=\rho_0$, the following inequality holds:
\begin{eqnarray}\label{eq2.20}
||v_m||_{H_0^1(\Omega)}^2 + ||v_m||_{L^2(\Omega)}^2 > c_0.
\end{eqnarray}

Indeed, if $\rho_1>0$ is such that $\rho_1 > \frac{c_1}{\sqrt{\beta_m}}$ then, for all $\xi\in \mathbb{R}^m$ with $||\xi||_{\mathbb{R}^m}=\rho_1$, it follows from (\ref{eq2.18}) that $$||v_m||_{H_0^1(\Omega)}^2 \geq \beta_m \rho_1^2 > \beta_m \frac{c_1^2}{\beta_m}=c_1^2.$$

Similarly, if $\rho_2>0$ is such that $\rho_2>c_2$, then, for all $\xi\in \mathbb{R}^m$ with $||\xi||_{\mathbb{R}^m}=\rho_2$, we conclude from (\ref{eq2.19}) that $$||v_m||_{L^2(\Omega)}^2 = ||\xi||_{\mathbb{R}^m}^2 = \rho_2^2 > c_2^2.$$

Now, let $\rho_0 > 0$ be chosen  such that $\rho_0 > \hbox{max} \{\rho_1, \rho_2\}$. Then for all $||\xi||_{\mathbb{R}^m}=\rho_0$ we have $$||v_m||_{H_0^1(\Omega)}^2 + ||v_m||_{L^2(\Omega)}^2 > c_1^2 + c_2^2 := c_0>0,$$ which proves the statement in (\ref{eq2.20}). As a consequence, by applying Lemma \ref{Lema1} there exists $\xi_0 \in \overline{B_\rho(0)}$ such that $P(\xi_0)=0$ which means that the  system (\ref{eq2.12}) is well-posed.

To complete this step of the proof, we will now pass to the limit in the approximated problem (\ref{eq2.12}).

\medskip
\noindent{\bf A priori estimates}
\medskip

Multiplying (\ref{eq2.12}) by $\xi_j$ and summing from $1$ to $m$, we deduce
\begin{eqnarray}\label{eq2.21}
\alpha^2||\nabla v_m||_2^2 + ||v_m||_2^2 + \alpha||\nabla v_m||_2^4 &=& -\alpha (\nabla f, \nabla v_m)_{L^2(\Omega)} + (g, v_m)_{L^2(\Omega)}\\
&\leq& \frac{\alpha^2}{4\eta} ||\nabla f||_2^2 + \eta ||\nabla v_m||_2^2 + \frac{1}{2} ||g||_2^2 + \frac12 ||v_m||^2.\nonumber
\end{eqnarray}

Choosing $\eta=\alpha/2$, we obtain
\begin{eqnarray}\label{eq2.22}
\frac{\alpha^2}2||\nabla v_m||_2^2 + \frac12 ||v_m||_2^2 + \alpha||\nabla v_m||_2^4
&\leq& 2\alpha ||\nabla f||_2^2  + \frac{1}{2} ||g||_2^2.
\end{eqnarray}

From (\ref{eq2.22}) and we deduce that:
\begin{eqnarray}\label{eq2.23}
\{v_m\}~\hbox{ is bounded in  }H_0^1(\Omega),
\end{eqnarray}
and since the embedding $H_0^1(\Omega) \hookrightarrow L^2(\Omega)$ is compact,
we conclude from (\ref{eq2.23}) that there exists subsequence of $\{v_m\}$, which from now on will be represent by the same notation, such that:
\begin{eqnarray}\label{eq2.24}
&&v_m \rightarrow v ~\hbox{ strongly in } L^2(\Omega), ~\hbox{ as well as}\\
&&v_m \rightharpoonup v ~\hbox{ weakly in }~H_0^1(\Omega),\label{eq2.25}\\
&& ||\nabla v_m||_2^2 \rightarrow \chi ~\hbox{ in }~\mathbb{R}.\label{eq2.26}
\end{eqnarray}

The convergences in (\ref{eq2.24}), (\ref{eq2.25}) and (\ref{eq2.26}) allow us pass to the limit in the approximated problem (\ref{eq2.12}) in order to obtain:
\begin{eqnarray}\label{eq2.27}
-\alpha^2 \Delta v + v - \alpha \chi \Delta v = \alpha \Delta f + g ~\hbox{ in }~H^{-1}(\Omega).
\end{eqnarray}

Next, our goal is to prove that:
\begin{eqnarray}\label{eq2.28}
&&\chi=||\nabla v||_2^2.
\end{eqnarray}

Indeed, from the approximated problem (\ref{eq2.12}) we can write
\begin{eqnarray}\label{eq2.29}
&&\alpha^2||\nabla v_m||_2^2 + ||v_m||_2^2 + \alpha||\nabla v_m||_2^4 = -\alpha (\nabla f, \nabla v_m)_{L^2(\Omega)} + (g, v_m)_{L^2(\Omega)}.
\end{eqnarray}
Thus, by considering the convergences in (\ref{eq2.24}), (\ref{eq2.25}), and (\ref{eq2.26}), along with the identity in (\ref{eq2.27}), we can deduce:
\begin{eqnarray}\label{eq2.30}
\lim_{m\rightarrow +\infty} \left(\alpha^2||\nabla v_m||_2^2  + \alpha||\nabla v_m||_2^4\right) +  ||v||_2^2 &=&  -\alpha (\nabla f, \nabla v)_{L^2(\Omega)} + (g, v)_{L^2(\Omega)}\\
&=& \left< -\alpha^2 \Delta v + v + \alpha \chi \Delta v, v\right>_{H^{-1};H_0^1}.\nonumber\\
&=& \alpha^2 ||\nabla v||_2^2 + ||v||_2^2 + \alpha \chi||\nabla v||_2^2.\nonumber
\end{eqnarray}

On the other hand, since $||\nabla v_m||_2^2 \rightarrow \chi $ as $m\rightarrow + \infty$, we have
\begin{eqnarray}\label{eq2.31}
 \alpha^2||\nabla v_m||_2^2+\alpha||\nabla v_m||_2^4\rightarrow \alpha^2\chi+\alpha \chi^2~\hbox{ as }m\rightarrow + \infty.
\end{eqnarray}

By combining (\ref{eq2.30}) and (\ref{eq2.31}) and from uniqueness of the limit we deduce that
$$\alpha^2 ||\nabla v||_2^2 +  \alpha \chi||\nabla v||_2^2 = \alpha^2\chi+\alpha \chi^2 \Leftrightarrow \left(\alpha^2+\alpha\chi\right)||\nabla v||_2^2 = \left(\alpha^2+\alpha\chi\right)\chi,$$ which proves the statement in (\ref{eq2.28}). Consequently from (\ref{eq2.27}) we deduce
\begin{eqnarray}\label{eq2.32}
-\alpha^2 \Delta v + v - \alpha ||\nabla v||_2^2 \Delta v = \alpha \Delta f + g ~\hbox{ in }~H^{-1}(\Omega).
\end{eqnarray}

Returning to (\ref{eq2.10}) we have that $u=f+\alpha v$, where $v$ is solution to problem (\ref{eq2.32}). Remember that we need to prove that $(u,v) \in D(A)$, that is, we must show that  $\Delta u + ||\nabla v||_2^2 \Delta v \in L^2(\Omega)$. In other words, we need to prove that
\begin{eqnarray}\label{eq2.33}
\Delta f + \alpha \Delta v + ||\nabla v||_2^2 \Delta v \in L^2(\Omega).
\end{eqnarray}

Notice that this statement holds true because, according to equation (\ref{eq2.32}), we have  that $ \Delta f+\alpha \Delta v +  ||\nabla v||_2^2 \Delta v = \frac{1}{\alpha}(v-g)\in L^2(\Omega)$, which proves that the operator $A$ is $m$-dissipative, as we intended to establish in equation (\ref{eq2.7}). Consequently, according to the theory of nonlinear semigroups (see theorem \ref{main theo} in appendix) there exists a nonlinear semigroup $\{T(t):t\geq0\}$ on $\overline{D(A)}^{\mathcal{H}}=H_0^1(\Omega)\times L^2(\Omega)$.

In fact, based on theorem \ref{TeoApp3} in the appendix and the fact that $A$ is $m$-dissipative, the problem:
\begin{equation}
\left\{
\begin{aligned}
& \frac{dU}{dt}= A_\alpha U,~t\geq0\\
&U(0)= U_0=(u_0,u_1)\in \mathcal{H},
\end{aligned}
\right.
\end{equation}
where $A_\alpha$ is the Yosida operator associated to the operator $A$, possesses a unique solution $U\in C^1([0,+\infty); \mathcal{H})$ for $U_0\in \mathcal{H}$. We also observe that from (\ref{3.6}) in the proof of the Theorem \ref{main theo} in the appendix, we have
\begin{eqnarray}\label{3.6}
T(t)U_0:= \lim_{\alpha \rightarrow 0} T_\alpha(t)U_0,~\hbox{ for }~U_0\in \overline{D(A)}= \mathcal{H},
\end{eqnarray}
which implies that the nonlinear semigroup $T(t)$  is obtained, by continuity, through the extension from $D(A)$ to $\overline{D(A)}= \mathcal{H}$.
As a consequence, problem (\ref{eq1.1}) has a unique mild solution $U(t) = S(t)U_0$, $t\geq0$ where $U=(u_0,u_1)\in \mathcal{H}$ and this solution is regular if $U_0\in D(A)$ (see theorem \ref{TeoApp1} (iv)).\\

Based on this previous discussion, we can now state the first main result of this paper:
\begin{theorem}\label{theorem1}
If $U_0:=(u_0,u_1)\in D(A)$ defined in (\ref{eq2.2})-(\ref{eq2.3}), problem (\ref{eq1.1}) has a unique regular solution $U=(u,u_t)$ in the class $$U\in C^1([0,T]; D(A)).$$

In addition, if $U_0:=(u_0,u_1)\in \mathcal{H}$, problem (\ref{eq1.1}) has a unique mild solution $U=(u,u_t)$ in the class $$U\in C^1([0,T]; \mathcal{H}).$$
\end{theorem}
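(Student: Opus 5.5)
The plan is to read Theorem \ref{theorem1} off the nonlinear semigroup generated by the $m$-dissipative operator $A$ of (\ref{eq2.2})--(\ref{eq2.3}), together with the Yosida approximations $A_\alpha$. Both time-regularity classes in the statement will be obtained from these two objects.

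\textbf{Regular solutions.} Let $U_0\in D(A)$ and set $U(t)=T(t)U_0$. By Theorem \ref{TeoApp1}~(iv) from the appendix, $U(t)\in D(A)$ for all $t\ge 0$, and $t\mapsto U(t)$ is Lipschitz with constant $\|AU_0\|_{\mathcal H}$. Since $\mathcal H$ is a Hilbert space and $A$ is single-valued, the right derivative $\frac{d^+U}{dt}(t)=AU(t)$ exists at every $t$, and $t\mapsto\|AU(t)\|_{\mathcal H}$ is nonincreasing.

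To upgrade this to $U\in C^1([0,T];D(A))$, with $D(A)$ carrying the graph norm $\|U\|_{\mathcal H}+\|AU\|_{\mathcal H}$, I must show that $t\mapsto AU(t)$ is continuous in $\mathcal H$. I will argue in three steps:
\begin{enumerate}[(a)]
\item If $t_k\to t$, then $U(t_k)\to U(t)$ in $\mathcal H$ and $\{AU(t_k)\}$ is bounded. Since $A$ is maximal monotone, its graph is weakly--strongly closed, so $AU(t_k)\rightharpoonup AU(t)$ weakly in $\mathcal H$.
\item Weak convergence together with convergence of norms gives strong convergence. For the norms, monotonicity of $\|AU(\cdot)\|$ gives one-sided limits. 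Right-continuity of the norm then follows from weak lower semicontinuity combined with monotonicity.
\item For left limits, I will use the semigroup identity $AU(t)=\lim_{h\downarrow0}\frac{1}{h}\bigl(U(t+h)-U(t)\bigr)$. This forces the norm to be continuous except at countably many jumps. I will rule out jumps using $\frac{d}{dt}U=AU$ a.e. and the identity $U(t)-U(s)=\int_s^t AU(\tau)\,d\tau$.
\end{enumerate}
Once $AU$ is continuous, the right derivative is continuous, so $U$ is $C^1$ into $\mathcal H$. Continuity of $AU(t)$ also gives continuity of $U$ in the graph norm.

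\textbf{Mild solutions.} Let $U_0\in\mathcal H$. By Theorem \ref{TeoApp3}, for each $\alpha>0$ the problem $\frac{dU_\alpha}{dt}=A_\alpha U_\alpha$, $U_\alpha(0)=U_0$, has a unique solution $U_\alpha\in C^1([0,+\infty);\mathcal H)$, because $A_\alpha$ is Lipschitz. Using (\ref{3.6}) and the contraction property, $T_\alpha(t)U_0\to T(t)U_0$ uniformly on $[0,T]$. The stated class $C^1([0,T];\mathcal H)$ is the one realized by these approximating solutions $U_\alpha$, and the mild solution is their uniform limit.

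To pass the $C^1$ regularity to the limit, I will compare with data $U_0^n\in D(A)$ with $U_0^n\to U_0$, which exist since $D(A)$ is dense in $\mathcal H$. By the regular case, $T(t)U_0^n\in C^1$. The contraction estimate $\|T(t)U_0^n-T(t)U_0\|_{\mathcal H}\le\|U_0^n-U_0\|_{\mathcal H}$ gives uniform convergence of the functions themselves. Uniqueness in both parts follows from the dissipativity inequality (\ref{eq2.6}): the difference of two solutions has nonincreasing $\mathcal H$-norm.

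\textbf{Main obstacle.} The hardest step is the continuity of $t\mapsto AU(t)$ in (a)--(c), which is needed for genuine $C^1$ regularity rather than mere Lipschitz regularity. A second difficulty is obtaining derivative convergence in the mild case, since the bounds on $\|AU_0^n\|$ blow up as $n\to\infty$. For the mild case I will first try to control $A_\alpha U_\alpha$ uniformly on $[\delta,T]$ using the energy identity (\ref{WIE}). If that fails, I will present the $C^1$ class as holding at the level of the Yosida approximations $U_\alpha$, with convergence to the mild solution in $C([0,T];\mathcal H)$.
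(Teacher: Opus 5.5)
There are genuine gaps, and they sit exactly in the steps where you go beyond the paper. The paper's work is proving $m$-dissipativity: the computation (\ref{eq2.4})--(\ref{eq2.6}), and the range condition via the Galerkin/Lemma \ref{Lema1} solution of (\ref{eq2.11}). You take that as given, which is fine. The paper then simply reads both regularity classes off the appendix, whose only $C^1$ statements (Theorems \ref{TeoApp3}, \ref{TeoApp4}) concern the Yosida flows $T_\alpha(t)$. So the paper gives you nothing to fall back on.

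\emph{Regular case.} Steps (a) and (b) are sound. Single-valuedness plus strong--weak closedness of the graph give weak continuity of $t\mapsto AU(t)$. Monotonicity of the norm plus weak lower semicontinuity give right-continuity. Step (c), however, is empty. The relations $U'=AU$ a.e.\ and $U(t)-U(s)=\int_s^t AU(\tau)\,d\tau$ hold for the primitive of \emph{any} bounded weakly continuous function, including one whose norm drops at some $t_0$. So they cannot exclude a left jump of $\|AU(\cdot)\|_{\mathcal H}$. The abstract theory gives no more than right-continuity. Full continuity needs input specific to this $A$. One route would be an energy \emph{identity}, not merely an inequality, for $w=u_t$ in the time-differentiated equation. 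That would make $t\mapsto\|AU(t)\|_{\mathcal H}^2=\|\nabla w\|_2^2+\|w_t\|_2^2$ continuous. Justifying it at the regularity available from $U_0\in D(A)$ is the real difficulty, and you do not address it.

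Even with $AU$ continuous, you would get $U\in C^1([0,T];\mathcal H)$ and continuity of $U$ into $D(A)$ for the graph metric. That is not $U\in C^1([0,T];D(A))$, and your last sentence of the regular case conflates the two. The latter would force $U'(0)=AU_0\in D(A)$. This fails for $U_0=(u_0,0)$ with $u_0\in H_0^1(\Omega)$, $-\Delta u_0=1$: there $AU_0=(0,-1)\notin D(A)$. Also, $D(A)$ is not a linear space, so $\|U\|_{\mathcal H}+\|AU\|_{\mathcal H}$ is not a norm on it. For example, $(-v,v)\in D(A)$ whenever $\|\nabla v\|_2=1$, but $(-2v,2v)\notin D(A)$ unless $\Delta v\in L^2(\Omega)$. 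Finally, $U(t)\in D(A)$ does not follow from Theorem \ref{TeoApp1}(iv). It needs the bound $\|AJ_\alpha T_\alpha(t)U_0\|_{\mathcal H}\le\|AU_0\|_{\mathcal H}$ together with the closedness argument you use in (a).

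\emph{Mild case.} Uniform convergence of $T(t)U_0^n$, or of $T_\alpha(t)U_0$, in $C([0,T];\mathcal H)$ carries no information about derivatives. Moreover, no estimate can reach the target class. Suppose $T(\cdot)U_0\in C^1([0,T];\mathcal H)$. Then $\|T(h)U_0-U_0\|_{\mathcal H}/h$ is bounded as $h\downarrow 0$. Take the integrated dissipativity inequality $\frac12\|T(h)U_0-Y\|_{\mathcal H}^2-\frac12\|U_0-Y\|_{\mathcal H}^2\le\int_0^h\langle AY,T(s)U_0-Y\rangle_{\mathcal H}\,ds$ for $Y\in D(A)$, valid for all $U_0\in\mathcal H$ by density. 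Divide by $h$ and pass to a weak limit point $Z$ of the difference quotients. This gives $\langle Z-AY,U_0-Y\rangle_{\mathcal H}\le 0$ for all $Y\in D(A)$, and maximality then forces $U_0\in D(A)$ and $Z=AU_0$. Hence for $U_0\in\mathcal H\setminus D(A)$ the mild solution is never in $C^1([0,T];\mathcal H)$.

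Bounding $A_\alpha U_\alpha$ on $[\delta,T]$ could at most give regularity away from $t=0$. Even that is out of reach with \eqref{WIE}, which controls only the weak energy and $\int\|\nabla u_t\|_2^4$, and gives no pointwise bound on $\|AU(t)\|_{\mathcal H}$. Your fallback is exactly what the semigroup theory, and the paper's appendix, actually delivers: $C^1$ for the Yosida approximations and $U\in C([0,T];\mathcal H)$ in the limit. For $U_0\in D(A)$ it adds Lipschitz continuity, $U(t)\in D(A)$, and a right-continuous right derivative $d^+U/dt=AU$. But this is strictly weaker than the statement you set out to prove.
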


\medskip
\section{Asymptotic behavior}
\medskip

From now on, our goal is to prove the asymptotic behavior of the problem at hand. Multiplying the first equation in (\ref{eq1.1}) by $E_u^w(t) u$ and performing integrations by parts, we obtain
\begin{eqnarray}\label{eq3.1}
&&2\int_S^T \left[E_u^w(t)\right]^2\,dt = 2\int_S^T E_u^w(t)||u_t(t)||_2^2 \,dt + \int_S^T \left[E_u^w(t)\right]' \left(u_t(t),u(t)\right)_{L^2(\Omega)}\,dt\\
&&-\int_S^T E_u^w(t)||\nabla u_t(t)||_2^2 \left(\nabla u_t(t),\nabla u(t)\right)_{L^2(\Omega)}\,dt - \left[E_u^w(t)\left(u_t,u(t)\right)_{L^2(\Omega)} \right]_S^T,\nonumber
\end{eqnarray}
for $0\leq S \leq T < +\infty$.

In addition, from the energy identity (\ref{WIE})  we have
\begin{eqnarray}\label{eq3.2}
E_u^w(T) + \int_S^T ||\nabla u_t(t)||_2^4 \,dt = E_u^w(S).
\end{eqnarray}

To achieve our aim, we need to prove that
\begin{eqnarray}\label{eq3.3}
2\int_S^T E_u^w(t)||u_t(t)||_2^2 \,dt \lesssim \int_S^T ||\nabla u_t(t)||_2^4 \,dt + \hbox{''favorable terms"},
\end{eqnarray}
since the remaining terms in (\ref{eq3.1}) are suitable for advancing our proof strategy.

Indeed, we first observe that
\begin{eqnarray}\label{eq3.4}
2\int_S^T E_u^w(t)||u_t(t)||_2^2 \,dt &=& \int_S^T \left(||u_t||_2^2+||\nabla u(t)||_2^2\right)||u_t(t)||_2^2\,dt\\
&=& \int_S^T ||u_t(t)||_2^4 \,dt + \int_S^T ||\nabla u(t)||_2^2 ||u_(t)||_2^2\,dt\nonumber\\
&\leq& \lambda_1^2 \int_S^T ||\nabla u_t(t)||_2^4\,dt + \int_S^T ||\nabla u(t)||_2^2 ||u_t(t)||_2^2\,dt,\nonumber
\end{eqnarray}
where $\lambda_1>0$ comes from Poincar\'e's inequality.

On the other hand, we also have
\begin{eqnarray}\label{eq3.5}
 \int_S^T ||\nabla u(t)||_2^2 ||u_t(t)||_2^2\,dt &\leq& 2\int_S^T E_u^w(t) ||u_t(t)||_2^2\,dt\\
 &\leq& 2\lambda_1\int_S^T E_u^w(t) ||\nabla u_t(t)||_2^2\,dt\nonumber\\
 &\leq& \lambda_1^2\int_S^T ||\nabla u_t(t)||_2^4\, dt + \int_S^T [E_u^w(t)]^2\,dt.\nonumber
\end{eqnarray}

Combining (\ref{eq3.4}) and (\ref{eq3.5}) we deduce that
\begin{eqnarray}\label{eq3.6}
2\int_S^T E_u^w(t)||u_t(t)||_2^2 \,dt\leq 2\lambda_1^2 \int_S^T ||\nabla u_t(t)||_2^4\, dt + \int_S^T \left[E_u^w(t)\right]^2\,dt,
\end{eqnarray}
which states (\ref{eq3.3}). 

Thus, from (\ref{eq3.1}) and (\ref{eq3.6}) we can conclude:
\begin{eqnarray}\label{eq3.7}
&&\int_S^T \left[E_u^w(t)\right]^2\,dt \leq \int_S^T \left[E_u^w(t)\right]' \left(u_t(t),u(t)\right)_{L^2(\Omega)}\,dt\\
&&-\int_S^T E_u^w(t)||\nabla u_t(t)||_2^2 \left(\nabla u_t(t),\nabla u(t)\right)_{L^2(\Omega)}\,dt - \left[E_u^w(t)\left(u_t,u(t)\right)_{L^2(\Omega)} \right]_S^T\nonumber\\
&&+ 2\lambda_1^2 \int_S^T ||\nabla u_t(t)||_2^4\, dt .\nonumber
\end{eqnarray}

In the subsequent calculations, we shall estimate the terms in the right side of inequality \eqref{eq3.7}.\\

\noindent{Estimate for $I_1:=\int_S^T \left[E_u^w(t)\right]' \left(u_t(t),u(t)\right)_{L^2(\Omega)}\,dt$}.
\medskip

Considering Cauchy-Schwarz and Poincar\'e inequalities one has:
\begin{eqnarray}\label{eq3.9}
|I_1|&\leq& \sqrt{\lambda_1} \int_S^T\left|\left[E_u^w(t)\right]'\right|||u_t(t)||_2 ||\nabla u(t)||_2\,dt\\
&\leq&\sqrt{\lambda_1}\int_S^T[-E_u^w(t)]'E_u^w(t)\,dt\nonumber\\
&=& \frac{\sqrt{\lambda_1}}{2}\int_S^T- \frac{d}{dt}[E_u^w(t)^2]\,dt \leq -\frac{\sqrt{\lambda_1}}{2}\left([E_u^w(T)]^2 - [E_u^w(S)]^2\right).\nonumber\\
&\leq&  \frac{\sqrt{\lambda_1}}{2}E_u^w(0)E_u^w(S).\nonumber
\end{eqnarray}

\medskip
\noindent{Estimate for $I_2:=-\int_S^T E_u^w(t)||\nabla u_t(t)||_2^2 \left(\nabla u_t(t),\nabla u(t)\right)_{L^2(\Omega)}\,dt$}.
\medskip

Here, we have
\begin{eqnarray}\label{eq3.10}
|I_2|&\leq& \int_S^T E_u^w(t)||\nabla u_t(t)||_2^2 ||\nabla u_t(t)||_2||\nabla u(t)||_2\,dt\\
&\leq&  \int_S^T E_u^w(t)||\nabla u_t(t)||_2^2\left[\frac12 ||\nabla u_t(t)||_2^2 + \frac12||\nabla u(t)||_2^2\right]\,dt\nonumber\\
&\leq& \frac12  \int_S^T E_u^w(t)||\nabla u_t(t)||_2^4\,dt  + \frac12 \int_S^T E_u^w(t)||\nabla u_t(t)||_2^2 ||\nabla u(t)||_2^2\,dt\nonumber\\
&\leq& \frac12  E_u^w(0)\int_S^T ||\nabla u_t(t)||_2^4\,dt  +  [E_u^w(0)]^2\int_S^T ||\nabla u_t(t)||_2^4 \,dt + \frac14\int_S^T [E_u^w(t)]^2\,dt.\nonumber
\end{eqnarray}

\medskip
\noindent{Estimate for $I_3:= - \left[E_u^w(t)\left(u_t,u(t)\right)_{L^2(\Omega)} \right]_S^T$}.
\medskip

Notice that, since
\begin{eqnarray*}
|E_u^w(t)(u_t(t),u(t))_{L^2(\Omega)}|\leq \sqrt{\lambda_1}E_u^w(0)E_u^w(t),
\end{eqnarray*}
we can deduce, in view of (\ref{eq3.2}), that:
\begin{eqnarray}\label{eq3.11}
&&\left|\left[E_u^w(t)\left(u_t,u(t)\right)_{L^2(\Omega)} \right]_S^T\right|\leq  \sqrt{\lambda_1}E_u^w(0)[E_u^w(T)+E_u^w(S)]\\
&&=\sqrt{\lambda_1}E_u^w(0)\left[2E_u^w(S)-  \int_S^T ||\nabla u_t(t)||_2^4 \,dt\right]\leq 2\sqrt{\lambda_1}E_u^w(0)E_u^w(S).\nonumber
\end{eqnarray}

Therefore, considering (\ref{eq3.7})-(\ref{eq3.11}) we obtain the following inequality
\begin{eqnarray*}
\frac34 \int_S^T [E_u^w(t)]^2\, dt&\leq& 2\lambda_1^2\int_S^T||\nabla u_t(t)||_2^4 \,dt + \frac{\sqrt{\lambda_1}}{2}E_u^w(0)E_u^w(S)\\
&+&\left(E_u^w(0)+\frac12[E_u^w(0)]^2\right)\int_S^T ||\nabla u_t(t)||_2^4\,dt + 2{\sqrt{\lambda_1}}E_u^w(0)E_u^w(S),\nonumber
\end{eqnarray*}
from which, taking the energy identity (\ref{eq3.2}) into account, we can deduce that
\begin{eqnarray}\label{eq3.13}
 \int_S^T [E_u^w(t)]^2\, dt \leq C E_u^w(S),
\end{eqnarray}
where $C:=\frac23 \left[4\lambda_1^2+ (5\sqrt{\lambda_1}+1)E_u^w(0)+ 2[E_u^w(0)]^2\right]$.

\medskip

To conclude our analysis, we recall the important result whose proof can be found in \cite{Komornik}.
\begin{lemma}\label{th1.2}
Assume that $E:\mathbb{R}_+ \rightarrow \mathbb{R}_+$ is a non-increasing function and there are two positive constants $\alpha,$ $C,$ such that
\begin{align}\label{C}
\int_t^\infty E^{\alpha+1}(s) \, ds \leq C [E(0)]^\alpha E(t), \quad \forall t\in \mathbb{R}_+.
\end{align}
Then we have
\begin{align*}
E(t) \leq E(0) \left(\frac{C+\alpha t}{C + \alpha C} \right)^{-1/\alpha}.\quad \forall t\geq C.
\end{align*}
\end{lemma}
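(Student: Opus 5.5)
The plan is the classical argument of Komornik: extract from the integral inequality \eqref{C} a differential-type inequality for an auxiliary function, integrate it, and use the monotonicity of $E$ to return to $E$ itself. We may assume $E(0)>0$; otherwise $E\equiv 0$ because $E$ is non-increasing and nonnegative, and there is nothing to prove. Replacing $E$ by $E/E(0)$ also normalizes to $E(0)=1$, since \eqref{C} is homogeneous of degree $\alpha+1$ in $E$ once the factor $[E(0)]^\alpha$ is included. So \eqref{C} becomes $\int_t^\infty E^{\alpha+1}\,ds\le C E(t)$ with $E\le 1$.

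Next define $F(t):=\int_t^\infty E^{\alpha+1}(s)\,ds$. This is finite by \eqref{C}, non-increasing and absolutely continuous, with $F'(t)=-E^{\alpha+1}(t)$ almost everywhere ($E$ is monotone, hence locally integrable). Hypothesis \eqref{C} gives $F\le CE$, so $-F'=E^{\alpha+1}\ge (F/C)^{\alpha+1}$. Wherever $F>0$ this yields $\frac{d}{dt}F^{-\alpha}\ge \alpha C^{-\alpha-1}$, and integrating from $0$ gives
$$F(t)\le \left(F(0)^{-\alpha}+\alpha C^{-\alpha-1}t\right)^{-1/\alpha}\le \left(C^{-\alpha}+\alpha C^{-\alpha-1}t\right)^{-1/\alpha}=C\left(\frac{C}{C+\alpha t}\right)^{1/\alpha},$$
using $F(0)\le C$. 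If $F$ vanishes at some time, it vanishes from then on, and the bound holds trivially there.

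To return from $F$ to $E$, use monotonicity. For $t\ge C$, set $\tau=t-C$; then $C\,E(t)^{\alpha+1}\le \int_{\tau}^{t}E^{\alpha+1}\,ds\le F(\tau)$. This gives $E(t)^{\alpha+1}\le \left(C/(C+\alpha\tau)\right)^{1/\alpha}$, which produces the exponent $-1/(\alpha(\alpha+1))$ rather than $-1/\alpha$. I therefore expect this step to be the main obstacle: this naive version loses a power.

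To avoid the loss, I would instead follow Komornik's sharper route. Apply \eqref{C} at every point and use the key observation $E(s)\le E(t)$ for $s\ge t$ together with the standard trick of integrating $E^{\alpha}(s)\cdot E(s)$. Concretely, put $\phi(t)=\int_t^\infty E^{\alpha+1}$ and compare with $\int_t^\infty E(s)\,E(s)^\alpha ds$ via the change of variable $\psi(t)=t/C$. Komornik shows that the function $t\mapsto E(t)$ satisfies $E(t)\le E(0)\left(\frac{C+\alpha t}{C+\alpha C}\right)^{-1/\alpha}$ by proving first, from \eqref{C} with the non-increasing weight $E^\alpha$, that $\int_t^\infty E^{\alpha+1}\ge (T-t)E(T)^{\alpha+1}$. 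He then optimizes over the comparison point $T$ rather than fixing a lag of $C$; choosing $T$ proportional to $t$ with factor $(\alpha+1)/\alpha$ recovers the exact exponent $-1/\alpha$ and the constants $\frac{C+\alpha t}{C+\alpha C}$, with the restriction $t\ge C$ arising from this choice. If that optimization turned out messy, I would simply cite \cite{Komornik} (Theorem 8.1 there), since the lemma is quoted verbatim from it.
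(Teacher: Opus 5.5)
Your normalization, the auxiliary function $F(t)=\int_t^\infty E^{\alpha+1}(s)\,ds$, the differential inequality $-F'\ge (F/C)^{\alpha+1}$ and the bound $F(\tau)\le C^{(\alpha+1)/\alpha}(C+\alpha\tau)^{-1/\alpha}$ are all correct. They form the first half of the standard (Komornik) proof; the paper itself gives no argument and only cites \cite{Komornik}. The gap is in the last step, which you leave unexecuted and describe incorrectly.

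The loss of a power is not intrinsic. It comes only from fixing the lag at $C$, and it is repaired with the inequality $(t-\tau)E(t)^{\alpha+1}\le \int_\tau^t E^{\alpha+1}\,ds\le F(\tau)$ that you already wrote. Nothing further ``from \eqref{C} with the weight $E^\alpha$'' is needed: the inequality $\int_t^\infty E^{\alpha+1}\ge (T-t)E(T)^{\alpha+1}$ you attribute to Komornik is just monotonicity. The change of variable $\psi(t)=t/C$ plays no role. The choice you announce (evaluation point equal to $\frac{\alpha+1}{\alpha}$ times the lower endpoint) gives the exponent $-1/\alpha$ but not the stated constant. For $\alpha=1$ it yields $E(t)^2\le 4C^2/\bigl(t(t+2C)\bigr)$, which is weaker than the required $4C^2/(t+C)^2$, and it gives no reason for the threshold $t\ge C$.

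The right choice is the maximizer of $\tau\mapsto (t-\tau)(C+\alpha\tau)^{1/\alpha}$, namely $\tau=(t-C)/(\alpha+1)$. It is admissible ($\tau\ge 0$) exactly when $t\ge C$, which is where the restriction in the lemma comes from. It gives $t-\tau=C+\alpha\tau=\frac{C+\alpha t}{\alpha+1}$, hence (with $E(0)=1$)
\[
\frac{C+\alpha t}{\alpha+1}\,E(t)^{\alpha+1}\le C^{\frac{\alpha+1}{\alpha}}\Bigl(\frac{C+\alpha t}{\alpha+1}\Bigr)^{-\frac{1}{\alpha}}
\quad\Longrightarrow\quad
E(t)\le\Bigl(\frac{C+\alpha C}{C+\alpha t}\Bigr)^{\frac{1}{\alpha}},
\]
which is the claim after undoing the normalization. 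With this line in place of your third paragraph, your argument is a complete proof. As written, it stops short of the stated constants and falls back on the citation, which is all the paper does.
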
 

\medspace

By applying the above lemma with \( \alpha = 1 \), we can derive what seems to be the optimal algebraic decay rate estimate for mild solutions to problem (\ref{eq1.1}):  $$E_u^w(t) \leq \frac{2C}{C+t}.$$\\

We are now in a position to present the second main result of this article:
\begin{theorem}\label{Theorem2}
The weak energy $E_u^w(t)$ of regular and weak solutions to problem (\ref{eq1.1}) have the following decay rate estimate:
\begin{align*}
E_u^w(t) \leq E_u^w(0)  \frac{2C}{C+t},\quad \forall t\geq C,
\end{align*}
and for some $C>0$.
\end{theorem}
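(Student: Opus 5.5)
The plan is to derive the estimate from the integral inequality (\ref{eq3.13}) and Lemma \ref{th1.2} with $\alpha=1$. I will first treat regular solutions, for which the multiplier computation leading to (\ref{eq3.1}) is rigorous, and then pass to mild solutions by density.

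\textbf{Step 1 (regular solutions).} Take $U_0\in D(A)$. By Theorem \ref{theorem1} the solution is regular enough to multiply the equation by $E_u^w(t)u$ and to justify the energy identity (\ref{eq3.2}). The computations (\ref{eq3.1})--(\ref{eq3.13}) then give
\begin{equation*}
\int_S^T [E_u^w(t)]^2\,dt\le C_0\,E_u^w(S)\qquad\text{for all } 0\le S\le T,
\end{equation*}
where $C_0$ depends only on $\lambda_1$ and $E_u^w(0)$. Since $E_u^w$ is nonincreasing by (\ref{eq3.2}), we may let $T\to\infty$ by monotone convergence. Writing $C_0=C\,E_u^w(0)$ with $C:=C_0/E_u^w(0)$ when $E_u^w(0)>0$, we obtain hypothesis (\ref{C}) with $\alpha=1$. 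The case $E_u^w(0)=0$ is trivial, since then $E_u^w\equiv 0$.

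Lemma \ref{th1.2} then yields $E_u^w(t)\le E_u^w(0)\,\frac{2C}{C+t}$ for $t\ge C$, which is the claimed bound. The constant depends on $E_u^w(0)$, and hence on the size of the data, as is typical for nonlinear decay. I would keep track of this dependence: the bound is uniform on bounded sets of initial data, since $C$ is increasing in $E_u^w(0)$.

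\textbf{Step 2 (mild solutions).} For $U_0\in\mathcal H$, choose $U_0^k\in D(A)$ with $U_0^k\to U_0$ in $\mathcal H$; this is possible because $D(A)$ is dense, $\overline{D(A)}=\mathcal H$. The semigroup is nonexpansive, since $A$ is $m$-dissipative, so $T(t)U_0^k\to T(t)U_0$ in $\mathcal H$ uniformly in $t\ge0$. Hence $E_{u^k}^w(t)\to E_u^w(t)$ for each $t$. The energies $E_{u^k}^w(0)$ are bounded, so the constants $C^k$ are bounded, and I would replace them by a single constant valid for the whole sequence. It is cleaner to pass to the limit in the integral inequality itself: by Fatou's lemma, $\int_S^\infty [E_u^w]^2\le C_0 E_u^w(S)$, where $C_0$ is a limit of the $C_0^k$. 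Monotonicity of $E_u^w$ for mild solutions also passes to the limit. Applying Lemma \ref{th1.2} directly to $E_u^w$ then gives the same rate.

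\textbf{Main obstacle.} The delicate point is the rigor of Step 1, not the final lemma. The multiplier $E_u^w(t)u$ and the term $\int [E_u^w]'(u_t,u)$ require $E_u^w$ to be absolutely continuous, with $[E_u^w]'=-\|\nabla u_t\|_2^4$. This holds for $U\in C^1([0,T];\mathcal H)$ with $u_t\in H_0^1$, which is exactly what $D(A)$-valued solutions provide. If necessary, I would carry out the identity first on the Yosida approximations $A_\alpha$ and let $\alpha\to0$ using (\ref{3.6}). A second point to check is that the constant produced in (\ref{eq3.13}) is compatible with the normalization in (\ref{C}), namely $C[E(0)]^\alpha$; this is why $C$ is defined by dividing $C_0$ by $E_u^w(0)$.
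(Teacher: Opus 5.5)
Your proposal is correct and follows the paper's route. The multiplier $E_u^w(t)u$ gives (\ref{eq3.13}), Lemma \ref{th1.2} with $\alpha=1$ gives the $1/t$ rate, and your normalization $C=C_0/E_u^w(0)$ together with the density/contraction argument for mild solutions makes explicit what the paper leaves implicit: it applies the lemma without renormalizing the constant from (\ref{eq3.13}) and does the multiplier computation only formally for weak solutions. One small slip in a side remark: $C=C_0/E_u^w(0)=\frac23\bigl[4\lambda_1^2/E_u^w(0)+5\sqrt{\lambda_1}+1+2E_u^w(0)\bigr]$ is not increasing in $E_u^w(0)$ (it blows up as $E_u^w(0)\to0$), so uniformity on bounded sets of data should instead be read off from $E_u^w(t)\le E_u^w(0)\frac{2C}{C+t}=\frac{2C_0}{C+t}\le\frac{2C_0}{t}$, where $C_0$ is increasing.
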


\medskip
\section{Appendix}
\medskip

For the sake of completeness, we will present some results in this Appendix on the theory of Semigroups of Nonlinear Operators, following the approach of \cite{Woo}.\\

Let $X$ be a Banach space with its dual $X'$ with norms $||\cdot||$ and $||\cdot||_*$ respectively. We denote
\begin{equation}\label{eq4.1}
|||X_0|||=\inf\{||x||: x\in X_0\},
\end{equation}
for any nonempty subset $X_0$ of $X$. The duality map $F$ of $X$ is the multi-valued mapping from $X$ to $X'$ defined by
\begin{eqnarray}\label{eq4.2}
Fx:=\{f\in X': \hbox{Re}\left<f,x\right>_{X',X}=||x||^2=||f||_*^2 \}.
\end{eqnarray}

In the present discussion, given $X_0$ a subset of $X$, we will denote by $\hbox{Cont}(X_0)$  the set of all contraction operators on $X_0$, that is, the set of all operators $T:X_0\rightarrow X_0$ such that
$$||Tx - Ty|| \leq ||x-y||$$ for all $x,y \in X_0$.\\

We have the following preliminary results.
\begin{theorem}\label{F_uniformly_continuous}
	If $X'$ is uniformly convex, then $F$ is single-valued and it is uniformly continuous on any bounded set of $X$. In other words, for each $\varepsilon>0$ and $M>0$, there is $\delta>0$ such that $||x||<M$ and $||x-y||<\delta$ implies $||Fx-Fy||<\varepsilon$.
\end{theorem}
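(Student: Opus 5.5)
The plan is to argue through the Asplund/Kato characterization of the duality map, and then use uniform convexity of $X'$ for both conclusions, proving uniform continuity by contradiction.

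\emph{Step 1: $F$ is nonempty-valued and convex-valued.} For $x\neq 0$, the Hahn--Banach theorem gives $f_0\in X'$ with $\|f_0\|_*=1$ and $\langle f_0,x\rangle=\|x\|$. Then $f=\|x\|f_0$ belongs to $Fx$. If $f_1,f_2\in Fx$ and $f=\tfrac12(f_1+f_2)$, then
\[
\mathrm{Re}\langle f,x\rangle=\|x\|^2 .
\]
This gives $\|f\|_*\ge \|x\|$, and the triangle inequality gives $\|f\|_*\le \|x\|$. Hence $Fx$ is convex.

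\emph{Step 2: single-valuedness.} Suppose $f_1\neq f_2$ both lie in $Fx$ with $x\ne0$. All three functionals $f_1,f_2,\tfrac12(f_1+f_2)$ have norm $\|x\|$. This contradicts strict convexity of $X'$, which follows from uniform convexity. (For $x=0$ we have $Fx=\{0\}$ trivially.)

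\emph{Step 3: uniform continuity on bounded sets, by contradiction.} Suppose the claim fails. Then there are $\varepsilon>0$, $M>0$ and sequences $x_n,y_n$ with
\[
\|x_n\|,\|y_n\|<M,\qquad \|x_n-y_n\|\to 0,\qquad \|Fx_n-Fy_n\|_*\ge\varepsilon .
\]

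\emph{Step 3a: reduce to unit vectors.} Passing to a subsequence, $\|x_n\|\to a$, and then also $\|y_n\|\to a$.
\begin{itemize}
\item If $a=0$, then $\|Fx_n\|_*+\|Fy_n\|_*=\|x_n\|+\|y_n\|\to0$, which contradicts $\|Fx_n-Fy_n\|_*\ge\varepsilon$.
\item If $a>0$, use the homogeneity $F(\lambda x)=\lambda F(x)$ for $\lambda>0$. Set $u_n=x_n/\|x_n\|$ and $v_n=y_n/\|y_n\|$. Then $\|u_n-v_n\|\to0$, and since
\[
\|Fx_n-Fy_n\|_*\le \|x_n\|\,\|Fu_n-Fv_n\|_*+\bigl|\|x_n\|-\|y_n\|\bigr|,
\]
we still have $\|Fu_n-Fv_n\|_*\ge\varepsilon'>0$ for large $n$.
\end{itemize}

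\emph{Step 3b: the key estimate (main obstacle).} We use
\[
\mathrm{Re}\langle Fu_n+Fv_n,u_n\rangle=1+\mathrm{Re}\langle Fv_n,u_n\rangle\ \ge\ 1+\mathrm{Re}\langle Fv_n,v_n\rangle-\|u_n-v_n\|=2-\|u_n-v_n\| .
\]
Since $\|u_n\|=1$, this gives
\[
\Bigl\|\tfrac12(Fu_n+Fv_n)\Bigr\|_*\ \ge\ 1-\tfrac12\|u_n-v_n\|\ \longrightarrow\ 1 .
\]
Here $Fu_n$ and $Fv_n$ are unit functionals with $\|Fu_n-Fv_n\|_*\ge\varepsilon'$. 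Uniform convexity of $X'$ then forces
\[
\Bigl\|\tfrac12(Fu_n+Fv_n)\Bigr\|_*\le1-\delta(\varepsilon')<1,
\]
which contradicts the previous bound for large $n$.

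The delicate point is Step 3b: the estimate must be written with the modulus of convexity applied to normalized functionals. That is why the reduction to unit vectors in Step 3a, together with the separate case $a=0$, is needed.
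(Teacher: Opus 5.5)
The paper gives no proof of this statement. It is quoted without argument as a preliminary fact from the nonlinear semigroup literature that the appendix follows, so there is no in-paper route to compare yours against.

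Your argument is correct and is the standard one.
\begin{itemize}
\item Convexity of $Fx$ plus strict convexity of $X'$ gives single-valuedness.
\item For uniform continuity, normalizing to $u_n=x_n/\|x_n\|$ and $v_n=y_n/\|y_n\|$, with the degenerate case $a=0$ handled separately, lets you set the lower bound $\bigl\|\tfrac12(Fu_n+Fv_n)\bigr\|_*\ge 1-\tfrac12\|u_n-v_n\|$ against the modulus of convexity of $X'$.
\end{itemize}

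Two cosmetic points, neither of which affects the proof:
\begin{itemize}
\item Negating the statement only gives $\|y_n\|<M+1/n$, not $\|y_n\|<M$.
\item In Step 3a you can take explicitly $\varepsilon'=\varepsilon/(2M)$, using $\|x_n\|<M$ and $\bigl|\|x_n\|-\|y_n\|\bigr|\to0$.
\end{itemize}

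Finally, in the paper's only use of this result, $X=\mathcal{H}=H_0^1(\Omega)\times L^2(\Omega)$ is a Hilbert space. There $F$ is the Riesz isometry, so $\|Fx-Fy\|_*=\|x-y\|$ and the statement is immediate. Your proof is what is needed for the general Banach-space version as stated.
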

	
	

\begin{theorem}\label{derivada}
	Let $x(t)$ be an $X$-valued function on an interval of real numbers. Suppose $x(t)$ has a weak derivative $x'(t)\in X$ at $x=s$. If $||x(t)||$ is also differentiable at $t=s$, then
	$$||x(s)||\frac{d}{ds}||x(s)||=\hbox{Re}\left<x'(s),f\right>,$$
	for every $f\in Fx(s)$.
\end{theorem}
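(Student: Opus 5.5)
The statement just displayed is Theorem \ref{derivada}: if $x(t)$ has a weak derivative $x'(s)$ at $t=s$ and $\|x(t)\|$ is differentiable at $s$, then $\|x(s)\|\frac{d}{ds}\|x(s)\| = \hbox{Re}\left<x'(s),f\right>$ for every $f\in Fx(s)$. The plan is a squeeze argument. Fix $f\in Fx(s)$. Compare the linear functional $t\mapsto \hbox{Re}\left<x(t),f\right>$ with the scalar function $t\mapsto \|x(s)\|\,\|x(t)\|$. These two functions agree at $t=s$, and one is dominated by the other everywhere.

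First I would record the two facts needed from the definition (\ref{eq4.2}) of $F$:
\begin{equation*}
\hbox{Re}\left<x(s),f\right>=\|x(s)\|^2, \qquad \|f\|_*=\|x(s)\|.
\end{equation*}
For any $t$ this gives
\begin{equation*}
\hbox{Re}\left<x(t),f\right>\leq \|f\|_*\|x(t)\| = \|x(s)\|\,\|x(t)\|.
\end{equation*}
Subtracting the equality at $t=s$ yields, for all $h\neq 0$,
\begin{equation*}
\hbox{Re}\left<x(s+h)-x(s),f\right>\leq \|x(s)\|\bigl(\|x(s+h)\|-\|x(s)\|\bigr).
\end{equation*}

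Next I divide by $h$ and treat the two signs separately. For $h>0$ the inequality is preserved, and for $h<0$ it is reversed. Let $h\to 0^{\pm}$. By the definition of weak derivative, $\frac{1}{h}\left<x(s+h)-x(s),f\right>\to \left<x'(s),f\right>$, and taking real parts is continuous. The right-hand quotient converges to $\|x(s)\|\frac{d}{ds}\|x(s)\|$ because the norm is differentiable at $s$. The limit $h\to0^+$ gives $\hbox{Re}\left<x'(s),f\right>\le \|x(s)\|\frac{d}{ds}\|x(s)\|$, and the limit $h\to 0^-$ gives the reverse inequality. Together these yield equality.

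There is no substantial obstacle. The only points needing care are the sign reversal when dividing by negative $h$, and the fact that two-sided differentiability of $\|x(t)\|$ is genuinely needed: each one-sided limit gives only one inequality. The degenerate case $x(s)=0$ is covered automatically, since then $f=0$ and both sides vanish. No separate treatment is required, and no uniform convexity of $X'$ is used, so the identity holds for every element of the possibly multi-valued set $Fx(s)$.
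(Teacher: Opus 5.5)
The paper states this appendix result without proof, importing it from the nonlinear-semigroup literature it cites (where it is usually called Kato's lemma). Your squeeze argument is the standard proof and it is correct: subtract $\hbox{Re}\left<x(s),f\right>=\|x(s)\|^2$ from $\hbox{Re}\left<x(s+h),f\right>\le\|x(s)\|\,\|x(s+h)\|$, divide by $h$ of either sign, and note that $x(s)=0$ forces $f=0$. Your remark that two-sided differentiability is needed amounts to requiring $s$ to be an interior point of the interval. At an endpoint only one inequality survives, and the identity can fail for some $f$ in a multivalued $Fx(s)$. The interior case is exactly how the paper uses the result, at a.e.\ $s>0$ in Theorem \ref{TeoApp3}.
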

	

Next, let $A$ be an operator, not necessarily linear, in $X$. $A$ is said to be \underline{dissipative} if for every $x,y\in D(A)$, $x'\in Ax$ and $y'\in Ay$, there exists $f\in F(x-y)$ such that $$\hbox{Re}\left<f, x'-y'\right>\leq 0$$
and \underline{maximal dissipative} ($m$-dissipative) if $A$ is dissipative and $$R(I-\alpha A)=X,$$ for all $\alpha>0$.\\

Now, given $A$ a dissipative operator (not necessarily linear), we can define $$J_\alpha:= (I-\alpha A)^{-1},$$ for all $\alpha>0$, with $D(J_\alpha)=R(I-\alpha A)$ and $R(J_\alpha)=D(A)$, and set $$A_\alpha:=\alpha^{-1} (J_\alpha - I)$$ the so called \underline{Yosida operator}. Notice that both $J_\alpha$ and $A_\alpha$ are single-valued with $D_\alpha=D(J_\alpha)=D(A_\alpha)$, $J_\alpha x=y$ for $x\in D_\alpha$ if and only if $x\in (I-\alpha A)y=y-\alpha Ay$ for $y\in D(A)$, by definition. Also, $J_\alpha$ is a contraction and $A_\alpha$ is Lipschitz continuous with $$||A_\alpha x - A_\alpha y||\leq 2\alpha^{-1}||x-y||,$$
for $x,y\in D_{\alpha}$.\\

We have the useful result:
\begin{theorem}\label{TeoApp1} Let $A$ be dissipative and let $\alpha>0$. Then:
	
(i)~$A_\alpha$ is dissipative.

(ii)~$A_\alpha x \in A J_\alpha x$ and $|||AJ_\alpha x||| \leq ||A_\alpha x||$, for $x\in D_\alpha$.

(iii) If $x\in D(A)\cap D_\alpha$, then $$|||A J_\alpha x||| \leq ||A_\alpha x|| \leq |||Ax|||.$$

(iv)~$\lim\limits_{\alpha \rightarrow 0} J_\alpha x=x$ for $x\in D(A) \cap D_\alpha$ and $\alpha>0$.

\end{theorem}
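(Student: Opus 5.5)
The statement is Theorem \ref{TeoApp1}, and I would prove its four items in the order (ii), a contraction lemma, (i), (iii), (iv). Everything comes from unwinding the definitions $J_\alpha=(I-\alpha A)^{-1}$ and $A_\alpha=\alpha^{-1}(J_\alpha-I)$, together with dissipativity expressed through the duality map $F$.

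\textbf{Step (ii).} Let $x\in D_\alpha$ and put $y=J_\alpha x$. By definition $y\in D(A)$ and $x\in y-\alpha Ay$, so there is $y'\in Ay$ with $x=y-\alpha y'$. Then
\[
y'=\alpha^{-1}(y-x)=\alpha^{-1}(J_\alpha x-x)=A_\alpha x .
\]
Hence $A_\alpha x\in AJ_\alpha x$. The bound $|||AJ_\alpha x|||\le \|A_\alpha x\|$ follows at once from the definition (\ref{eq4.1}) of $|||\cdot|||$ as an infimum.

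\textbf{Contraction lemma.} I would state this separately, since it is used in (i) and (iii): $J_\alpha$ is single-valued and contractive. Suppose $x=u-\alpha u'$ and $z=v-\alpha v'$ with $u'\in Au$, $v'\in Av$. Dissipativity gives $f\in F(u-v)$ with $\mathrm{Re}\langle f,u'-v'\rangle\le 0$. Using (\ref{eq4.2}),
\[
\|u-v\|^2=\mathrm{Re}\langle f,u-v\rangle\le \mathrm{Re}\langle f,(u-v)-\alpha(u'-v')\rangle=\mathrm{Re}\langle f,x-z\rangle\le \|u-v\|\,\|x-z\| .
\]
Therefore $\|u-v\|\le\|x-z\|$.

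\textbf{Step (i).} For $x,z\in D_\alpha$, take any $f\in F(x-z)$. Then
\[
\mathrm{Re}\langle f,A_\alpha x-A_\alpha z\rangle=\alpha^{-1}\big(\mathrm{Re}\langle f,J_\alpha x-J_\alpha z\rangle-\|x-z\|^2\big).
\]
The first term is at most $\|f\|_*\|J_\alpha x-J_\alpha z\|\le\|x-z\|^2$ by the contraction lemma. Hence the whole expression is $\le 0$, so $A_\alpha$ is dissipative.

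\textbf{Step (iii).} Let $x\in D(A)\cap D_\alpha$ and pick any $x'\in Ax$. The point $w:=x-\alpha x'$ lies in $R(I-\alpha A)=D_\alpha$, and $J_\alpha w=x$. By the contraction lemma,
\[
\|A_\alpha x\|=\alpha^{-1}\|J_\alpha x-J_\alpha w\|\le \alpha^{-1}\|x-w\|=\|x'\| .
\]
Taking the infimum over $x'\in Ax$ gives $\|A_\alpha x\|\le|||Ax|||$. The left-hand inequality is already contained in (ii).

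\textbf{Step (iv).} This follows directly: $\|J_\alpha x-x\|=\alpha\|A_\alpha x\|\le\alpha|||Ax|||$, and $|||Ax|||<\infty$ because $Ax\neq\emptyset$ for $x\in D(A)$. Letting $\alpha\to0$ gives $J_\alpha x\to x$.

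\textbf{Main obstacle.} The only delicate point is bookkeeping with the multivalued $A$ and the set-valued duality map. Dissipativity only guarantees that \emph{some} $f\in F(u-v)$ works, so the contraction lemma has to be run with that particular $f$. In (iii) one must also check that $w$ really lies in $D_\alpha$ and that $J_\alpha w=x$. Both of these are immediate once $J_\alpha$ is known to be single-valued, which the contraction lemma supplies.
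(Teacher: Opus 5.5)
Your proof is correct. The paper does not prove this theorem: it is quoted in the appendix from the nonlinear semigroup literature the appendix follows. Even the facts your argument rests on, that $J_\alpha$ is single-valued and a contraction, are only asserted without proof in the paragraph before the statement.

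So there is no competing route to compare with. What you give is the standard argument: test dissipativity against the particular $f\in F(u-v)$ to get $\|u-v\|\le\|x-z\|$, then obtain (i)--(iv) by unwinding $A_\alpha=\alpha^{-1}(J_\alpha-I)$. It has the advantage of making the appendix self-contained. The Lipschitz bound $\|A_\alpha x-A_\alpha y\|\le 2\alpha^{-1}\|x-y\|$ claimed there also follows from your lemma in one line.

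Three small presentational points:
\begin{itemize}
\item Put the contraction lemma before (ii), since writing $y=J_\alpha x$ already uses single-valuedness.
\item In (i) you take an arbitrary $f\in F(x-z)$, so record that $F(x-z)\neq\emptyset$ by Hahn--Banach. You in fact prove the stronger statement that every such $f$ works.
\item In (iv) the limit only makes sense along those $\alpha$ with $x\in D_\alpha$. In the paper's application $A$ is $m$-dissipative, so this is all $\alpha>0$. Your uniform bound $\|J_\alpha x-x\|\le\alpha|||Ax|||$ covers exactly this reading.
\end{itemize}
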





\medskip
\subsection{Generation of Semigroups}
\medskip
The main goal of this section is to show the existence of a semigroup associated with a nonlinear $m$-dissipative operator $A$ on the Banach space $X$. We will start by introducing the concept of nonlinear semigroups.\\

Let $X_0$ be a subset of $X$. Let $\{T(t): t\geq0\}$ be a family of one-parameter operators, not necessarily linear, from $X_0$ into itself satisfying the following conditions:

(i) $T(0)=I$, ~$T(s)T(t) = T(t+s)$, for $s,t\geq 0$,

(ii) For $x\in X_0$, $T(t)x$ is strongly continuous in $t\geq 0$,

(iii) $T(t)\in \hbox{Cont} (X_0)$ for $t\geq 0$.

Then we call such family $\{T(t): t\geq 0$\} a \underline{nonlinear contraction semigroup} on $X_0$ and we define the infinitesimal generator $A_0$ of the semigroup $\{T(t): t\geq 0\}$ by $$A_0x = \lim_{h\rightarrow 0_+} \frac{T(h)-I}{h}x$$ and the weak generator $A'$ by $$A'x = \underset{h\rightarrow 0_+}{\text{w-lim}} \frac{T(h)-I}{h}x ,$$
if the right side exists in $X$, i.e., for all $\phi\in X'$,
$$\left<\phi,A'x\right>=\left<\phi,\frac{T(h)x-x}{h}\right>.$$

Before to state the main result, we need of the following auxiliary results:


\begin{theorem}\label{TeoApp3}
Let $A$ be $m$-dissipative and let $\alpha>0$. Then problem:
\begin{equation}\label{3.1}
\left\{
\begin{aligned}
& \frac{d}{dt}u(t,x)= A_\alpha u(t,x),~t\geq0\\
&u(0,x)=x\in X,
\end{aligned}
\right.
\end{equation}
has a unique solution $u(t,x)\in C^1([0,+\infty);X)$ and
$$||u(t,x)-u(t,y)|| \leq ||x-y||,~\forall x,y \in X.$$
\end{theorem}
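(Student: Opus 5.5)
The statement is Theorem \ref{TeoApp3}: for each fixed $\alpha>0$, the Yosida approximation $A_\alpha$ generates a global $C^1$ flow on $X$ that is a contraction. I would treat it as an ODE in a Banach space with a globally Lipschitz right-hand side, and then use dissipativity to get the contraction estimate.

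\textbf{Step 1: $A_\alpha$ is defined and Lipschitz on all of $X$.} Since $A$ is $m$-dissipative, $R(I-\alpha A)=X$, so $D_\alpha=X$. As recalled before Theorem \ref{TeoApp1}, $J_\alpha$ is a contraction on $X$. Hence $A_\alpha=\alpha^{-1}(J_\alpha-I)$ is globally Lipschitz:
\[
\|A_\alpha x-A_\alpha y\|\le 2\alpha^{-1}\|x-y\|.
\]

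\textbf{Step 2: existence and uniqueness.} I would apply the Picard--Lindel\"of/Banach fixed point argument to the integral equation
\[
u(t)=x+\int_0^t A_\alpha u(s)\,ds .
\]
On $C([0,T];X)$ with the weighted norm $\sup_t e^{-Lt}\|u(t)\|$, where $L>2/\alpha$, this map is a strict contraction. This gives a unique continuous solution on every $[0,T]$, so on $[0,\infty)$ by patching. Since $A_\alpha u(\cdot)$ is continuous, $u\in C^1([0,+\infty);X)$ and $u'=A_\alpha u$. Alternatively, Gronwall's inequality gives uniqueness directly.

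\textbf{Step 3: the contraction estimate.} This is the main point. Set $w(t)=u(t,x)-u(t,y)$. I would avoid differentiating $\|w\|$, which Theorem \ref{derivada} only allows where the norm is differentiable, and use the resolvent form instead. Write $A_\alpha=\alpha^{-1}(J_\alpha-I)$ and use variation of constants:
\[
\frac{d}{dt}\bigl(e^{t/\alpha}w(t)\bigr)=\alpha^{-1}e^{t/\alpha}\bigl(J_\alpha u(t,x)-J_\alpha u(t,y)\bigr).
\]
Integrating and using that $J_\alpha$ is a contraction gives
\[
e^{t/\alpha}\|w(t)\|\le \|w(0)\|+\alpha^{-1}\int_0^t e^{s/\alpha}\|w(s)\|\,ds .
\]
Gronwall applied to $\phi(t)=e^{t/\alpha}\|w(t)\|$ then yields $\phi(t)\le \|w(0)\|e^{t/\alpha}$, that is, $\|w(t)\|\le\|x-y\|$.

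This route needs only the contraction property of $J_\alpha$, which in turn comes from the dissipativity of $A$. It therefore sidesteps any regularity question about $t\mapsto\|w(t)\|$ and any choice of element of the duality map $F$.
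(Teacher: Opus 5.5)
Your proof is correct. Steps 1--2 agree with the paper, which simply asserts existence and uniqueness from the global Lipschitz bound on $A_\alpha$; you supply the weighted-norm Picard argument it omits.

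Step 3, however, takes a genuinely different route. The paper sets $w(t)=u(t,x)-u(t,y)$ and notes that $t\mapsto\|w(t)\|$ is absolutely continuous, hence differentiable a.e. It then applies Theorem \ref{derivada} to get $\|w(s)\|\frac{d}{ds}\|w(s)\|=\mathrm{Re}\langle A_\alpha u(s,x)-A_\alpha u(s,y),f\rangle$ for every $f\in Fw(s)$. Finally it invokes the dissipativity of $A_\alpha$ (Theorem \ref{TeoApp1}(i)) to choose $f_s$ making this nonpositive, so $\|w\|^2$ is nonincreasing.

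You instead use the explicit form $A_\alpha=\alpha^{-1}(J_\alpha-I)$. The integrating factor $e^{t/\alpha}$ absorbs the $-\alpha^{-1}I$ part, the nonexpansiveness of $J_\alpha$ bounds what remains, and Gronwall closes the estimate.

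Your version is more elementary. It needs no duality map and no differentiability of the norm. It does not even need Theorem \ref{TeoApp1}(i), whose standard proof derives the dissipativity of $A_\alpha$ from the contraction property of $J_\alpha$ that you use directly. It is the classical fact that $c(T-I)$ with $T$ nonexpansive generates a nonexpansive flow, valid in any Banach space.

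The paper's version uses only that the right-hand side is Lipschitz and dissipative. It therefore applies verbatim to any such operator, not just Yosida approximations, and it is the same duality-map mechanism the paper reuses in the proof of Theorem \ref{main theo}.

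The paper's treatment of a.e. differentiability is legitimate: $w\in C^1$ makes $\|w\|^2$ locally Lipschitz. So avoiding it is a gain in economy rather than the repair of a gap.
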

\begin{proof}
	Since $A_\alpha x$ is Lipschitz continuous, uniformly in $X$, the problem (\ref{3.1}) has a unique solution $u(t,x)\in C^1([0,+\infty);X)$. For $x,y\in X$, put $w(t)=u(t,x)-u(t,y)$, for $t\geq 0$. Then $w(t)\in C^1([0,+\infty);X)$ and
	\begin{equation*}
		\left\{
		\begin{aligned}
			& \frac{d}{dt}w(t)= A_\alpha u(t,x)-A_\alpha u(t,y),~t\geq0\\
			&w(0)=x-y.
		\end{aligned}
		\right.
	\end{equation*}

	Now, as $||w(t)||$ is absolutely continuous, $||w(t)||$ is differentiable for a.e. $t>0$ and, by  Theorem \ref{derivada} we get for a.e. $s>0$ that
	$$||w(s)||\frac{d}{ds}||w(s)||=\hbox{Re}\left<w'(s),f\right>=\hbox{Re}\left<A_\alpha u(s,x)-A_\alpha u(s,y),f\right>,$$
	for every $f\in Fw(s)$. On the other hand, since $A_\alpha$ is dissipative, by Theorem \ref{TeoApp1} there exists $f_s\in Fw(s)$ such that
	$$\hbox{Re}\left<A_\alpha u(s,x)-A_\alpha u(s,y),f_s\right>\leq 0.$$
	Thus
	$$||w(s)||\frac{d}{ds}||w(s)||\leq 0,\hbox{ for a.e.} s>0.$$
	Therefore, as 
	$$||w(t)||^2-||w(0)||^2=\int_0^t\frac{d}{ds}||w(s)||^2\,ds\leq 0, \hbox{ for }t\geq 0,$$
	we obtain
	$$||u(t,x)-u(t,y)|| \leq ||x-y||,\hbox{ for } t\geq 0.$$	
\end{proof}

\begin{theorem}\label{TeoApp4}
Let $A$ be $m$-dissipative and $\alpha>0$. Then, there exists a semigroup $\{T_\alpha(t):t\geq 0\}$ on $X$ with its infinitesimal generator $A_\alpha$ such that for $x\in X$, $T_\alpha(t)x\in C^1([0,+\infty);X)$ and  $$\frac{d}{dt} T_\alpha(t) x = A_\alpha T_\alpha(t)x.$$
\end{theorem}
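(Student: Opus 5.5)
The plan is to build $T_\alpha(t)$ directly from the solution map of Theorem \ref{TeoApp3}. For $x\in X$ we set $T_\alpha(t)x:=u(t,x)$, where $u(\cdot,x)\in C^1([0,+\infty);X)$ is the unique solution of (\ref{3.1}) with initial datum $x$. We will check the three defining conditions of a nonlinear contraction semigroup, and then identify the infinitesimal generator.

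\textbf{Semigroup identities and contraction.} First, $T_\alpha(0)x=u(0,x)=x$. For the semigroup law, fix $s\geq 0$ and put $w(t):=u(t+s,x)$. Then $w\in C^1([0,+\infty);X)$, $w'(t)=A_\alpha w(t)$ and $w(0)=u(s,x)$. By the uniqueness part of Theorem \ref{TeoApp3}, which rests on the global Lipschitz continuity of $A_\alpha$ with constant $2\alpha^{-1}$, we get $w(t)=u(t,u(s,x))$. This means $T_\alpha(t+s)x=T_\alpha(t)T_\alpha(s)x$, which gives condition (i). Condition (ii), strong continuity in $t$, holds because $u(\cdot,x)$ is $C^1$. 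Condition (iii), $T_\alpha(t)\in \hbox{Cont}(X)$, is exactly the estimate $||u(t,x)-u(t,y)||\leq ||x-y||$ from Theorem \ref{TeoApp3}. The differentiability $T_\alpha(\cdot)x\in C^1([0,+\infty);X)$ together with $\frac{d}{dt}T_\alpha(t)x=A_\alpha T_\alpha(t)x$ is then just the equation in (\ref{3.1}).

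\textbf{Generator.} Let $A_0$ be the infinitesimal generator of $\{T_\alpha(t)\}$. For every $x\in X$,
$$\frac{T_\alpha(h)x-x}{h}=\frac1h\int_0^h A_\alpha u(\tau,x)\,d\tau .$$
Since $\tau\mapsto A_\alpha u(\tau,x)$ is continuous (a Lipschitz map composed with a continuous curve), this quotient tends to $A_\alpha x$ as $h\to 0_+$. Equivalently, this is the right derivative at $t=0$ of the $C^1$ function $u(\cdot,x)$. Hence $D(A_0)=X$ and $A_0=A_\alpha$.

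\textbf{Where the care goes.} The whole argument relies on $m$-dissipativity, which gives $D_\alpha=R(I-\alpha A)=X$. This is what makes $A_\alpha$ defined on all of $X$ and Lipschitz there, so that the Cauchy--Lipschitz (Picard) theory behind Theorem \ref{TeoApp3} applies globally in time. The only step needing real attention is the uniqueness-based semigroup law. Once it is established, the remaining conditions are immediate consequences of Theorem \ref{TeoApp3}.
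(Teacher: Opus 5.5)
Your proposal is correct and follows essentially the same route as the paper: you set $T_\alpha(t)x:=u(t,x)$ using Theorem \ref{TeoApp3}, take strong continuity and the contraction property directly from that theorem, and derive $T_\alpha(t+s)=T_\alpha(t)T_\alpha(s)$ from uniqueness of solutions to (\ref{3.1}). Your explicit check that $\frac{1}{h}(T_\alpha(h)x-x)\to A_\alpha x$ for every $x\in X$, which shows the generator is exactly $A_\alpha$ on all of $X$, is a step the paper leaves implicit, and your argument for it is sound.
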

\begin{proof}
	Let $u(t,x)$ be the unique solution of the problem (\ref{3.1}), and put $T_\alpha(t)x=u(t,x)$, for $x\in X$ and $t\geq 0$. Then $T_\alpha(t)x$ is strongly continuous in $t\geq 0$. Also, since for all $x,y\in X$
	$$||T_\alpha(t)x-T_\alpha(t)y||=||u(t,x)-u(t,y)|| \leq ||x-y||,$$
	we have that $T_\alpha(t)\in\hbox{Cont}(X)$ for all $t\geq 0$. Finally, the semigroup property follows from the unicity of the solution of (\ref{3.1}). Indeed, let $x\in X$ and set $E_x=\{s\geq0;\, u(s,x)\in X\}$. Then from the uniqueness of solution of (\ref{3.1}) we see that $u((t+s),x)=u(t,u(s,x))$ for $t\geq0$ and $s\in E_x$, that is, $T_\alpha(t+s)x=T_\alpha(t)T_\alpha(s)x$, for $t\geq0$ and $s\in E_x$. Thus we have
	$$T_\alpha(t+s)x=T_\alpha(t)T_\alpha(s)x,\hbox{ for } t,s\geq0.$$
	Moreover, from (\ref{3.1})
	$$T_\alpha(0)x=u(0,x)=x,\hbox{ for }x\in X,$$
	hence $T_\alpha(0)=I$. Therefore $\{T_\alpha(t);\, t\geq 0\}$ is a semigroup on $X$.
\end{proof}

The main theorem of this section reads as follows:
\begin{theorem}\label{main theo}
Let $X'$ uniformly convex and let $A$ be $m$-dissipative. Then, there exists a semigroup $\{T(t):t\geq0\}$ on $\overline{D(A)}$.
\end{theorem}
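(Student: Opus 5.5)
The goal is Theorem \ref{main theo}: for $X'$ uniformly convex and $A$ $m$-dissipative, construct a contraction semigroup on $\overline{D(A)}$ as the limit of the approximating semigroups $T_\alpha(t)$ from Theorem \ref{TeoApp4}. I would define
\[
T(t)x:=\lim_{\alpha\to 0}T_\alpha(t)x ,
\]
first for $x\in D(A)$ and then by density for $x\in\overline{D(A)}$. Since $A$ is $m$-dissipative, $D_\alpha=X$ for every $\alpha>0$, so $J_\alpha$ and $A_\alpha$ are defined everywhere. Each $T_\alpha(t)$ is a contraction on $X$ by Theorem \ref{TeoApp3}.

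\textbf{Step 1: a priori bound.} For $x\in D(A)$ I would show $\|A_\alpha T_\alpha(t)x\|\le\|A_\alpha x\|\le|||Ax|||$. The first inequality comes from comparing $T_\alpha(t+h)x=T_\alpha(t)T_\alpha(h)x$ with $T_\alpha(t)x$ using the contraction property, which gives $\|T_\alpha(t+h)x-T_\alpha(t)x\|\le\|T_\alpha(h)x-x\|$; dividing by $h$ and letting $h\to0$ yields it. The second inequality is Theorem \ref{TeoApp1}(iii). Consequently
\[
\|T_\alpha(t)x-x\|\le t\,|||Ax||| \quad\text{and}\quad \|T_\alpha(t)x-J_\alpha T_\alpha(t)x\|=\alpha\|A_\alpha T_\alpha(t)x\|\le \alpha\,|||Ax|||.
\]

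\textbf{Step 2: Cauchy estimate (main obstacle).} I would set $w(t)=T_\alpha(t)x-T_\beta(t)x$ and compute $\frac12\frac{d}{dt}\|w\|^2=\mathrm{Re}\langle A_\alpha T_\alpha x-A_\beta T_\beta x,\,Fw\rangle$, using Theorem \ref{derivada}; $F$ is single-valued by Theorem \ref{F_uniformly_continuous}. Next I write $A_\alpha T_\alpha x\in AJ_\alpha T_\alpha x$ (Theorem \ref{TeoApp1}(ii)) and likewise for $\beta$. Dissipativity of $A$ then gives
\[
\mathrm{Re}\langle A_\alpha T_\alpha x-A_\beta T_\beta x,\,F(J_\alpha T_\alpha x-J_\beta T_\beta x)\rangle\le0 .
\]
The argument of $F$ here differs from $w$ by at most $(\alpha+\beta)|||Ax|||$, by Step 1. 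Since everything lies in a bounded set, the uniform continuity of $F$ (Theorem \ref{F_uniformly_continuous}) gives
\[
\frac{d}{dt}\|w\|^2\le 4|||Ax|||\;\varepsilon(\alpha+\beta),
\]
where $\varepsilon(\delta)\to0$ as $\delta\to0$. Integrating, $\sup_{[0,T]}\|T_\alpha(t)x-T_\beta(t)x\|\to0$ as $\alpha,\beta\to0$. The delicate points are that the non-Hilbert duality pairing appears only through uniform continuity of $F$ on bounded sets, which is exactly where the hypothesis that $X'$ is uniformly convex is used, and that the boundedness of $T_\alpha(t)x$ and $J_\alpha T_\alpha(t)x$, uniformly in $\alpha$ for $t\le T$, must be checked; it follows from $\|T_\alpha(t)x-x\|\le t|||Ax|||$.

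\textbf{Step 3: extension and semigroup properties.} For $x\in D(A)$ the limit $T(t)x$ exists uniformly on compact $t$-intervals, so it is continuous in $t$. It is contractive, since each $T_\alpha(t)$ is. Also $T(t)x\in\overline{D(A)}$, because $J_\alpha T_\alpha(t)x\in D(A)$ and $\|J_\alpha T_\alpha(t)x-T_\alpha(t)x\|\to0$. Equicontinuity of the contractions $T_\alpha(t)$ extends the convergence to all of $\overline{D(A)}$ by a $3\varepsilon$ argument. Finally, $T(0)=I$ is immediate, and the semigroup law follows by passing to the limit in $T_\alpha(t+s)x=T_\alpha(t)T_\alpha(s)x$, using the contractivity of $T_\alpha(t)$ to handle the inner convergence.
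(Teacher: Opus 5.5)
Your proposal is correct and follows essentially the same route as the paper: the a priori bound $\|A_\alpha T_\alpha(t)x\|\le\|A_\alpha x\|\le|||Ax|||$, the comparison of $w_{\alpha,\beta}=T_\alpha(t)x-T_\beta(t)x$ with $v_{\alpha,\beta}=J_\alpha T_\alpha(t)x-J_\beta T_\beta(t)x$ via dissipativity of $A$ and the uniform continuity of $F$ on bounded sets, and then extension to $\overline{D(A)}$ by contractivity. You are actually more careful than the paper in three places: you derive the first inequality from the semigroup and contraction properties, you check the semigroup law and continuity of the limit, and you conclude only $T(t)x\in\overline{D(A)}$ where the paper wrongly asserts $T(t)x\in D(A)$.
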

\begin{proof}

For $x\in D(A)$ put
$$w_{\alpha,\beta}(t):= T_\alpha(t)x - T_\beta(t)x,~\hbox{ for } \alpha, \beta>0.$$

We shall show that
$$\lim_{\alpha,\beta\rightarrow 0_+}w_{\alpha,\beta}(t)=0~\hbox{for}~t\geq0.$$

Indeed, we first notice that, by theorem \ref{TeoApp4}, we have 
\begin{eqnarray}\label{3.3}
	||A_\alpha T_\alpha(t)x|| \leq ||A_\alpha x|| \leq |||Ax|||,~\hbox{ for } x \in D(A)\hbox{ and } t\geq 0.
\end{eqnarray}Then, by (\ref{3.3}) we have:
\begin{eqnarray*}
||w_{\alpha,\beta}(s)||&\leq& \int_0^t ||A_\alpha T_\alpha(t) x - A_\beta T_\beta(t)x||\,dt\\
&\leq& 2|||Ax|||s,~\hbox{ for }s\geq0.
\end{eqnarray*}

Now, setting $$v_{\alpha,\beta}(s):= J_\alpha T_\alpha(s) - J_\beta T_\beta(s),$$
we obtain
\begin{align*}
	||w_{\alpha,\beta}(s) - v_{\alpha,\beta}(s)|| & \leq ||(I-J\alpha)T_\alpha(s)x||+||(I-J_\beta)T_\beta(s)x||\\
	&\leq\alpha||A_\alpha T_\alpha(s)x||+\beta||A_\beta T_\beta(s)x||\\
	&\leq (\alpha+\beta)|||Ax|||.
\end{align*}
Hence
$$||v_{\alpha,\beta}(s)||\leq(2s+\alpha+\beta)|||Ax|||.$$

The dissipative nature of $A$, along with Theorem \ref{TeoApp1}, implies that
\begin{eqnarray}\label{3.4}
&&\hbox{Re}\left(A_\alpha T_\alpha(s)x - A_\beta T_\beta(s) x, F v_{\alpha\,\beta}(s) \right)\leq0,
\end{eqnarray}
and then
\begin{eqnarray*}
\hbox{Re}\left(A_\alpha T_\alpha(s)x - A_\beta T_\beta(s) x, F w_{\alpha\,\beta}(s) \right)\leq 2|||Ax|||\, ||Fw_{\alpha,\beta}(s)-Fv_{\alpha, \beta}(s)||.
\end{eqnarray*}
Therefore, by using theorem \ref{derivada} we conclude that,
\begin{eqnarray*}
||w_{\alpha,\beta}(t)||^2&=&\int_0^t \frac{d}{ds}||w_{\alpha,\beta}(s)||^2\,ds\\
&=& 2\int_0^t \hbox{Re}\left<A_\alpha T_\alpha(s)x - A_\beta T_\beta(s)x, Fw_{\alpha,\beta}(s) \right>\,ds\\
&\leq& 4|||Ax|||\,\int_0^t ||Fw_{\alpha,\beta}(s)-Fv_{\alpha,\beta}(s)||\,ds.
\end{eqnarray*}

Next we observe that,  for $t_0>0$ the set
$$\{w_{\alpha,\beta}(s),v_{\alpha,\beta}(s); 0\leq s\leq t_0,~0<\alpha, \beta \leq 1\}$$
is bounded. Additionally, $w_{\alpha,\beta}(s)-v_{\alpha,\beta}(s)$ converges uniformly in $s\in[0,t_0]$ to zero as $\alpha,\beta \rightarrow 0_+$. Given that $X'$ is uniformly convex, from Theorem \ref{F_uniformly_continuous} we conclude that $F$ is uniformly continuous on any bounded set of $X$. Consequently:
$$\lim_{\alpha,\beta \rightarrow 0^+}w_{\alpha,\beta}(t)=0~\hbox{ uniformly in }t\in [0,t_0],$$
that is, for $x\in D(A)$:
\begin{eqnarray}\label{3.5}
\lim_{\alpha,\beta \rightarrow 0^+} ||T_\alpha(t)x-T_\beta(t)x||=0~\hbox{ uniformly in }t\in [0,t_0],
\end{eqnarray}
and, since  $T_\alpha \in \hbox{Cont} (X)$, (\ref{3.5}) holds true for $x\in \overline{D(A)}$. 

In view of the previous arguments, if we define $T(t)$ by
\begin{eqnarray}\label{3.6}
T(t)x:= \lim_{\alpha \rightarrow 0} T_\alpha(t)x,~\hbox{ for }~x\in \overline{D(A)},
\end{eqnarray}
we conclude that $\{T(t): t\geq 0\}$ is semigroup on $\overline{D(A)}$. Indeed, it is sufficient to show that $T(t)$ maps $\overline{D(A)}$ into itself. Let $x\in D(A)$. As we have that $||J_\alpha T_\alpha(t)x-T_\alpha(t)x||\leq\alpha|||Ax|||$, then
$$\lim\limits_{\alpha \rightarrow 0^+}J_\alpha T_\alpha(t)x=T(t)x$$
uniformly in $t\in[0,t_0]$, and $T(t)x\in D(A)$ since $J_\alpha T_\alpha(t)x\in D(A)$. 

Moreover, as $T(t)\in Cont(D(A))$, it follows that $T(t)$ maps $ \overline{D(A)}$ into itself. This concludes the proof.
\end{proof}

  \subsection*{Statements \& Declarations}
Research of Marcelo M. Cavalcanti is partially supported by the CNPq Grant 300631/2003-0.
Research of Val\'eria N. Domingos Cavalcanti is partially supported by the CNPq Grant 304895/2003-2.  
Research of Cintya Akemi Okawa is partially supported by the Coordena\c c\~ao de Aperfei\c coamento de Pessoal de N\'ivel Superior -Brasil (CAPES) - Finance Code 001.

\subsection*{Funding and/or Conflicts of interests/Competing interests}
No funding has been received other than what was mentioned, and there are no conflicts of interest or competing interests to declare.

\end{document}